\newtheorem{theorem} {{\textsf{Theorem}}}
\newtheorem{proposition}[theorem]{{\textsf{Proposition}}}
\newtheorem{corollary}[theorem]{{\textsf{Corollary}}}
\newtheorem{definition}[theorem]{{\textsf{Definition}}}
\newtheorem{remark}[theorem]{{\textsf{Remark}}}
\newtheorem{lemma}[theorem]{{\textsf{Lemma}}}
\begin{document}

\title{On regular genus and G-degree of PL 4-manifolds with boundary}
\author{Biplab Basak$^1$ and Manisha Binjola}

\date{}

\maketitle

\vspace{-15mm}
\begin{center}

\noindent {\small Department of Mathematics, Indian Institute of Technology Delhi, New Delhi 110016, India.$^2$}


\footnotetext[1]{{\em Corresponding author.}}

\footnotetext[2]{{\em E-mail addresses:} \url{biplab@iitd.ac.in} (B.
Basak), \url{binjolamanisha@gmail.com} (M. Binjola).}

\medskip

\date{December 03, 2022}
\end{center}

\hrule

\begin{abstract}
In this article, we introduce two new PL-invariants: weighted regular genus and weighted G-degree for manifolds with boundary. We first prove two inequalities involving some PL-invariants which state that for any PL-manifold $M$ with non spherical boundary components, the regular genus $\mathcal{G}(M)$ of $M$ is at least the weighted regular genus $\tilde{G}(M)$ of $M$ which is again at least the generalized regular genus $\bar{G}(M)$ of $M$. Another inequality states that the weighted G-degree $\tilde{D}_G (M)$ of $M$ is always greater than or equal to the G-degree $D_G (M)$ of $M$. Let $M$ be any compact connected PL $4$-manifold with $h$ number of non spherical boundary components. Then we compute the following:
$$\tilde{G} (M) \geq 2 \chi(M)+3m+2h-4+2 \hat{m} \mbox{  and } \tilde{D}_G (M) \geq 12(2 \chi(M)+3m+2h-4+2 \hat{m}),$$ where $m$ and $\hat{m}$ are the ranks of the fundamental groups of $M$ and the corresponding singular manifold $\widehat{M}$ (obtained by coning off the boundary components of $M$) respectively. As a
 consequence we prove that the regular genus $\mathcal{G}(M)$ satisfies the following inequality:
 $$\mathcal{G} (M) \geq 2 \chi(M)+3m+2h-4+2 \hat{m},$$
 which improves the previous known lower bounds for the regular genus $\mathcal{G}(M)$ of $M$.
Then we define two classes of gems for PL $4$-manifold $M$ with boundary: one consists of semi-simple gems and the other consists  of weak semi-simple gems, and prove that the lower bounds for the weighted G-degree and weighted regular genus are attained in these two classes respectively. 

\end{abstract}
 
%

\noindent {\bf Keywords} \small{PL-manifolds,  Crystallizations, Regular genus, Gurau degree (G-degree). }

\smallskip

\noindent {\bf Mathematics Subject Classification} \small{Primary 57Q15; Secondary 57Q05, 57K41, 05E45, 05C15.}

\medskip


\section{Introduction}

A crystallization $(\Gamma,\gamma)$ of a PL $d$-manifold is a certain type of edge colored graph which represents the manifold (cf. Subsection \ref{crystal} for details). The crystallization theory was introduced by Pezzana who gave the existence of a crystallization  for every closed connected PL $d$-manifold (see \cite{pe74}). Later,  the existence of crystallization was shown for every compact PL $d$-manifold with boundary (see \cite{cg80, ga83}). 
The term regular genus for a closed connected PL $d$-manifold which extends the notion of genus in dimension 2, has been introduced in \cite{ga81}, which is related to the existence of regular embeddings of gems of the manifold into surfaces (cf. Subsection \ref{sec:genus} for details). Later, in \cite{ga87}, the concept of regular genus has been extended for compact PL $d$-manifolds with boundary, for $d\geq 2$. The regular genus of a closed connected orientable (resp., a non-orientable) surface equals the genus (resp., half of the genus) of the surface. Some beautiful results on  regular genus of mapping tori can be found in \cite{bb19}. In \cite{bc15}, the authors gave lower bound for regular genus of closed $4$-manifolds. In \cite{cp90}, two lower bounds for the regular genus of connected compact PL $d$-manifolds with boundary have been computed. In \cite{bm20}, we gave several lower bounds for compact $4$-manifolds with boundary. In \cite{ccg20}, the authors provided a lower bound for both regular genus and G-degree for compact $4$-manifolds with empty or connected boundary. In this article, we further improve the previous lower bounds for regular genus of compact  PL $4$-manifolds with boundary which is an easy consequence of the main theorem (cf. Theorem \ref{thm:weightedgenus}).

In \cite{cc20}, the authors have introduced the concept of generalized regular genus for compact PL $d$-manifolds with empty or non-spherical boundary components, which coincides with the regular genus for closed PL $d$-manifolds. A lower bound for the generalized regular genus of  compact $4$-manifolds with  at most one boundary component has been computed in \cite{cc20,ccg20}, which also gives a lower bound for the regular genus of  compact $4$-manifolds with  at most one non-spherical boundary component. In this article, we introduce another new invariant weighted regular genus for compact $4$-manifolds with  several non-spherical boundary components, which coincides with the generalized regular genus  when the manifold is a compact $4$-manifold with at most one boundary component. First, we note that for any compact PL $d$-manifold $M$ with boundary, the weighted regular genus of $M$ lies between the regular genus and generalized regular genus of $M$ such that the regular genus being the largest of these three. If $M$ is closed, then all the three PL-invariants coincide. The main contribution of this article is that we give a lower bound for the weighted regular genus of compact PL $4$-manifolds with  several non-spherical boundary components. As a consequence, we give a lower bound for the regular genus  of compact $4$-manifolds with  several boundary components, which improves the previous known estimates  for the regular genus.

The notion of G-degree of closed connected PL $d$-manifold was first introduced in \cite{ccdg18}, which is a key concept in the approach to Quantum Gravity via tensor models. Later, it has been extended for singular manifolds in  \cite{ccg17}, and hence for compact PL $4$-manifolds with  empty or non-spherical boundary as there is a one-to-one correspondence between singular manifolds and compact $4$-manifolds with  empty or non-spherical boundary.  Several combinatorial properties of G-degree  have been studied recently in \cite{cg19,cc19,cc20,ccg20} for  compact $4$-manifolds with  at most one boundary component. In this article, we have introduced a new invariant  weighted G-degree for  compact $4$-manifolds with  several non-spherical boundary components, which coincide with the $G$-degree when the manifold is a compact $4$-manifold with  at most one boundary component. All the PL-invariants mentioned above are non-negative. Also, for any compact PL $d$-manifold $M$ with boundary, weighted G-degree  of the manifold is at least the G-degree of that manifold. In this article, we also give a lower bound for the  weighted G-degree for compact $4$-manifolds with non-spherical boundary components.
 
 In \cite{ccg20}, the class of  semi-simple and weak semi-simple  gems have been introduced for compact $4$-manifolds with empty or connected boundary. In this paper, we extend the classes to compact $4$-manifolds with several boundary components. Then we prove that a compact PL $4$-manifold $M$ with boundary attains the lower bound for the weighted G-degree (resp., weighted regular genus) if and only if $M$ admits a  semi-simple (resp., weak semi-simple) gem.

\section{Preliminaries}
In this section, we shall give a brief overview of crystallization theory. It provides a combinatorial tool for representing piecewise-linear (PL) manifolds of arbitrary dimension via colored graphs and is used to study geometrical and topological properties of manifolds.

\subsection{Crystallization} \label{crystal}

A multigraph is a graph where multiple edges are allowed but loops are restricted. For a multigraph $\Gamma= (V(\Gamma),E(\Gamma))$, a surjective map $\gamma : E(\Gamma) \to \Delta_d:=\{0,1, \dots , d\}$ is called a proper edge-coloring if $\gamma(e) \ne \gamma(f)$ for any adjacent edges $e,f$. The elements of the set $\Delta_d$ are called the {\it colors} of $\Gamma$. A graph $(\Gamma,\gamma)$ is called {\it $(d+1)$-regular} if degree of each vertex is $d+1$ and is said to be {\it $(d+1)$-regular with respect to a color $c$} if after removing all the edges of color $c$ from $\Gamma$, the resulting graph is $d$-regular. We refer to \cite{bm08} for standard terminology on graphs. All spaces and maps will be considered in PL-category.

\smallskip

A  regular {\it $(d+1)$-colored graph} is a pair $(\Gamma,\gamma)$, where $\Gamma$ is $(d+1)$-regular and $\gamma$ is a proper edge-coloring.  A {\it $(d+1)$-colored graph with boundary} is a pair $(\Gamma,\gamma)$, where $\Gamma$ is not a $(d+1)$-regular graph but a $(d+1)$-regular with respect to a color $c$ and $\gamma$ is a proper edge-coloring. If coloration is understood, one can use $\Gamma$ for $(d+1)$-colored graphs instead of $(\Gamma,\gamma)$. For each $B \subseteq \Delta_d$ with $h$ elements, the graph $\Gamma_B =(V(\Gamma), \gamma^{-1}(B))$ is an $h$-colored graph with edge-coloring $\gamma|_{\gamma^{-1}(B)}$. For a color set $\{j_1,j_2,\dots,j_k\} \subset \Delta_d$, $g(\Gamma_{\{j_1,j_2, \dots, j_k\}})$ or $g_{j_1j_2 \dots j_k}$ denotes the number of connected components of the graph $\Gamma_{\{j_1, j_2, \dots, j_k\}}$. Let $\dot{g}_{j_1j_2 \dots j_k}$ denote the number of regular components of $\Gamma_{\{j_1, j_2, \dots, j_k\}}$. A graph $(\Gamma,\gamma)$ is called {\it contracted} if subgraph $\Gamma_{\hat{c}}:=\Gamma_{\Delta_d\setminus c}$ is connected for all $c$.

\smallskip
 
Let $\mathbb{G}_d$ denote the set of graphs $(\Gamma,\gamma)$ which are $(d+1)$-regular with respect to the fixed color $d$. Thus  $\mathbb{G}_d$ contains all the regular $(d+1)$-colored graphs as well as all $(d+1)$-colored graphs with boundary. We sometimes call a regular $(d+1)$-colored graph or a $(d+1)$-colored graphs with boundary simply by a $(d+1)$-colored graph if it is understood that the graph is regular or not. If $(\Gamma,\gamma)\in \mathbb{G}_d$ then the vertex with degree $d+1$ is called an internal vertex and the vertex with degree $d$ is called a boundary vertex. Let $2\overline{p}$ and $2\dot{p}$ denote the number of boundary vertices and internal vertices respectively. Then the number of vertices is denoted by $2p$, i.e., $p=\overline{p}+\dot{p}$.
 For each graph $(\Gamma,\gamma) \in \mathbb{G}_d$,  we define its  boundary graph $(\partial \Gamma,\partial \gamma)$ as follows:

\begin{itemize}
\item{} there is a bijection between $V(\partial \Gamma)$ and the set of boundary vertices of $\Gamma$;

\item{} $u_1,u_2 \in V(\partial \Gamma)$ are joined in $\partial \Gamma$ by an edge of color $j$ if and only if $u_1$ and $u_2$ are joined in $\Gamma$  by a path formed by $j$ and $d$ colored edges alternatively.
\end{itemize} 

\smallskip

Note that, if $(\Gamma,\gamma)$ is $(d+1)$-regular then $(\Gamma,\gamma)\in \mathbb{G}_d$ and $\partial \Gamma = \emptyset$. For each  $(\Gamma,\gamma) \in \mathbb{G}_d$, a corresponding $d$-dimensional simplicial cell-complex ${\mathcal K}(\Gamma)$ is determined as follows:

\begin{itemize}
\item{} for each vertex $u\in V(\Gamma)$, take a $d$-simplex $\sigma(u)$ and label its vertices by $\Delta_d$;

\item{} corresponding to each edge of color $j$ between $u,v\in V(\Gamma)$, identify the ($d-1$)-faces of $\sigma(u)$ and $\sigma(v)$ opposite to $j$-labeled vertices such that the vertices with same label coincide.
\end{itemize}

 The geometric carrier $|\mathcal{K}(\Gamma)|$ is a $d$-pseudomanifold and $(\Gamma,\gamma)$ is said to be a gem (graph encoded manifold) of any $d$-pseudomanifold homeomorphic to $|\mathcal{K}(\Gamma)|$ or simply is said to represent the $d$-pseudomanifold.  We refer to \cite{bj84} for CW-complexes and related notions. From the construction it is easy to see that, for $\mathcal{B} \subset \Delta_d$ of cardinality $h+1$, ${\mathcal K}(\Gamma)$ has as many $h$-simplices with vertices labeled by $\mathcal{B}$ as the number of connected components of $\Gamma_{\Delta_d \setminus \mathcal{B}}$ (cf. \cite{fgg86}).
\begin{definition}
A closed connected PL $d$-manifold is a compact $d$-dimensional polyhedron which has a simplicial triangulation such that the link of each vertex is $(d-1)$-sphere. 

A connected compact PL $d$-manifold with non-empty boundary is a compact $d$-dimensional polyhedron which has a simplicial triangulation where the link of at least one vertex is a  $(d-1)$-ball and the link of each of the other vertices is either a $(d-1)$-sphere or a $(d-1)$-ball.

A singular PL $d$-manifold is a compact $d$-dimensional polyhedron which has a simplicial triangulation where the links of vertices are closed connected $(d-1)$ manifolds while, for each $h\geq 1$, the link of any $h$-simplex is a PL $(d-h-1)$-sphere. A vertex whose link is not a sphere is called a singular vertex. Clearly, a closed (PL) $d$-manifold is a singular (PL) $d$-manifold with no singular vertices.
\end{definition}

From the correspondence between $(d+1)$-regular colored graphs and $d$-pseudomanifolds, it is easy to visualise that:

\begin{enumerate}[$(1)$] 
\item $|\mathcal{K}(\Gamma)|$ is a closed connected PL $d$-manifold if and only if for each $c\in \Delta_d$, $\Gamma_{\hat{c}}$ represents $\mathbb{S}^{d-1}$.

\item $|\mathcal{K}(\Gamma)|$ is a connected compact PL $d$-manifold with boundary if and only if for each $c\in \Delta_d$, $\Gamma_{\hat{c}}$ represents either $\mathbb{S}^{d-1}$ or $\mathbb{B}^{d-1}$, and there exists a vertex $c\in \Delta_d$ such that $\Gamma_{\hat{c}}$ represents $\mathbb{B}^{d-1}$.

\item $|\mathcal{K}(\Gamma)|$ is a singular (PL) $d$-manifold if and only if for each $c\in \Delta_d$, $\Gamma_{\hat{c}}$ represents a closed connected PL $(d-1)$-manifold.
\end{enumerate}

If $(d+1)$-colored graph  $(\Gamma,\gamma)$ which is a gem of a closed (PL) $d$-manifold $M$, is contracted then it is called a {\it crystallization} of $M$. In this case, the number of vertices of ${\mathcal K}(\Gamma)$ is exactly $d+1$ (which is the minimal). It is not hard to see that, for any $(d+1)$-colored graph (with boundary) $(\Gamma,\gamma)$, $|{\mathcal K}(\Gamma)|$ is orientable if and only if $\Gamma$ is a bipartite graph. If $\Gamma$ represents a PL $d$-manifold with boundary then we can define its boundary graph $(\partial \Gamma,\partial \gamma)$, and each component of the boundary-graph $(\partial \Gamma,\partial \gamma)$ represents a component of $\partial M$.

On the other hand, if  $(\Gamma,\gamma)\in \mathbb{G}_d$ represents a $d$-manifold with non-connected boundary then $\Gamma$ can not be contracted. Let $(\Gamma,\gamma)\in \mathbb{G}_d$ represent a $d$-manifold with boundary. Then, it is easy to see that $\Gamma_{\hat d}$ is connected and each component of $\partial \Gamma$ is contracted if and only if $\mathcal{K}(\Gamma)$ has exactly $dh+1$ vertices (which is the minimal), where $h$ is the number of components of $\partial M$. 

Let $M$ be a compact $d$-manifold with $h$ boundary components. A $(d+1)$-colored gem $(\Gamma,\gamma) \in \mathbb{G}_d$ of $M$ is said to be a {\it crystallization} of $M$ if $g(\Gamma_ {\hat{d}})=1$ and $g(\Gamma_ {\hat{c}})=h$, for each $c \in \Delta_{d-1}$.

The initial point of the crystallization theory is the Pezzana's existence theorem (cf. \cite{pe74}) which gives the existence of a crystallization for every closed connected PL $d$-manifold. Later, it has been extended to the boundary case (cf \cite{cg80, ga83}). Further, the existence of crystallizations/gems has been extended for singular (PL) $d$-manifolds (cf. \cite{ccg17}). It is known that a PL $d$-manifold with boundary can always be represented by a $(d+1)$-colored graph $(\Gamma,\gamma)$ which is regular with respect to a fixed color $k$, for some $k\in \Delta_{d}$. Without loss of generality, we can assume that $k=d$, i.e., $(\Gamma,\gamma) \in \mathbb{G}_d$. 

A $1$-dipole of color $j \in \Delta_d$ of a $(d+1)$-colored graph (possibly with boundary) $(\Gamma,\gamma) \in \mathbb{G}_d$ is a subgraph $\theta$ of $\Gamma$ consisting of two vertices $x,y$ joined by color $j$ such that 
$\Gamma_{\hat{j}} (x) \neq \Gamma_ {\hat{j}} (y)$, where $\Gamma_ {\hat{j}} (u)$ denotes the component of $\Gamma_ {\hat{j}}$ containing $u$. The cancellation of a $1$-dipole from $\Gamma$ consists of two steps: first deleting $\theta$ from $\Gamma$ and second welding the same colored hanging edges.(see \cite{bcg01} for more details)

\subsection{Regular Genus of PL $d$-manifolds}\label{sec:genus}

 Let $(\Gamma,\gamma)\in \mathbb{G}_d$ be a bipartite (resp. non bipartite) $(d+1)$-colored graph (with boundary) which represents a connected compact PL $d$-manifold $M$ with boundary (possibly $\partial M=\phi$). Add a new vertex $u^\prime$ for each boundary vertex $u$ and join these two vertices by a $d$-colored edge and we get a new graph $(\Gamma^\prime,\gamma^\prime)$. Then for each cyclic permutation $\varepsilon=(\varepsilon_0,\dots,\varepsilon_d)$ of $\Delta_d$, there exists a regular imbedding of $\Gamma^\prime$ into an orientable (resp. non orientable) surface $F$ such that the intersection of vertices of $\Gamma^\prime$ and $\partial F$ is the set of new added vertices and the regions are bounded either by a cycle (internal region) or by a walk (boundary region) of $\Gamma^ \prime$ with $\varepsilon_j,\varepsilon_{j+1}$(j mod d+1) colored edges alternatively.
 
 Using Gross `voltage theory' (resp. Stahl  `embedding schemes') in  (\cite{gro74}) (resp., \cite{st78}), it is easy to show that for every cyclic permutation $\varepsilon$ of $\Delta_d$, a regular imbedding of bipartite graph (resp. non bipartite) $\Gamma$ into an orientable (resp. non orientable) surface  $F_ \varepsilon$ exists. Moreover, genus (resp. half of genus) $\rho_ \varepsilon$ of $F_ \varepsilon$ satisfies
 $$2-2\rho_ \varepsilon(\Gamma)=\sum_{i \in \mathbb{Z}_{d+1}}\dot{g} _{\varepsilon_i\varepsilon_{i+1}} + (1-d) \ \dot{p} + (2-d) \ \overline{p} +\partial g_{\varepsilon_0\varepsilon_{d-1}}$$
 where $2\dot{p}$ and $2\bar{p}$  denote the number of internal vertices and boundary vertices in $\Gamma$, and $\partial g_{ij}$ denote the number of $\{i,j\}$-colored cycles in $\partial \Gamma$. For more details we refer \cite{bcg01, ga87}.
  
The regular genus $\rho(\Gamma)$ of $(\Gamma,\gamma)$ is defined as
$$\rho(\Gamma)= \min \{\rho_{\varepsilon}(\Gamma) \ | \  \varepsilon =(\varepsilon_0,\varepsilon_1,\dots,\varepsilon_d )\ \mbox{ is a cyclic permutation of } \ \Delta_d\}.$$

In dimension two, it is easy to see that if $(\Gamma,\gamma)$ represents a surface $F$, then the corresponding $(\Gamma',\gamma')$ regularly imbeds into $F$ itself. Hence, for each surface $F$,

\begin{align*}
\mathcal G(F)=
\left\{ \begin{array}{lcl}
genus(F)  & \mbox{if} & F \mbox{ is orientable}, \\
\frac{1}{2} \times genus(F) & \mbox{if} & F \mbox{ is non-orientable}.
\end{array}\right. \nonumber
\end{align*}

Further on similar steps, if $(\Gamma,\gamma)\in \mathbb{G}_d$ is a regular bipartite (resp. non bipartite) $(d+1)$-colored graph of order $2p$ which represents a singular $d$-manifold, then the genus (resp. half of genus) $\rho_\varepsilon$ of surface $F_ \varepsilon$ satisfies
$$2-2\rho_ \varepsilon(\Gamma)=\sum_{i \in \mathbb{Z}_{d+1}}g_{\varepsilon_i\varepsilon_{i+1}} + (1-d)p.$$ 
The regular genus $\rho(\Gamma)$ of $(\Gamma,\gamma)$ is defined as
$$\rho(\Gamma)= \min \{\rho_{\varepsilon}(\Gamma) \ | \  \varepsilon =(\varepsilon_0,\varepsilon_1,\dots,\varepsilon_d )\ \mbox{ is a cyclic permutation of } \ \Delta_d\}.$$

 \begin{definition}
 Let $M$ be a connected compact PL $d$-manifold with boundary (possibly empty). Then, the regular genus of $M$ is defined as 
 $$\mathcal G(M) = \min \{\rho(\Gamma) \ | \  (\Gamma,\gamma)\in \mathbb{G}_d \mbox{ represents } M\}.$$
 \end{definition}

\section{Main results}
\begin{proposition}\label{prop:crys}
 Any compact orientable (resp., non-orientable) (PL) $d$-manifold $M$ with boundary (possibly empty)  admits a bipartite (resp., non-bipartite) $(d+1)$-colored graph with boundary (possibly empty) representing it. Moreover, $M$ admits a crystallization.
  \end{proposition}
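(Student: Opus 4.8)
The plan is to realize $M$ combinatorially through a balanced triangulation and then to cut the resulting colored graph down to a crystallization by elementary moves. For the first assertion, I would fix a simplicial triangulation $K$ of $M$ (which exists since $M$ is a PL manifold) and pass to its first barycentric subdivision $K'$. Coloring each vertex $\widehat\sigma$ of $K'$ by $\dim\sigma\in\Delta_d$ turns $K'$ into a balanced triangulation: a $d$-simplex of $K'$ is indexed by a flag $\sigma_0<\sigma_1<\dots<\sigma_d$ with $\dim\sigma_i=i$, and hence carries exactly one vertex of each color. Let $(\Gamma,\gamma)$ be the colored dual $1$-skeleton of $K'$ --- one vertex per $d$-simplex of $K'$, with a $c$-colored edge joining two $d$-simplices that share the face opposite their color-$c$ vertices. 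A short inspection of flags shows that for $c\in\{0,\dots,d-1\}$ the $(d-1)$-face of $K'$ opposite a color-$c$ vertex lies in exactly two $d$-simplices, while the face opposite a color-$d$ vertex lies in a single $d$-simplex precisely when the $(d-1)$-simplex $\sigma_{d-1}$ of $K$ appearing in the flag lies in $\partial M$. Consequently each vertex of $\Gamma$ meets edges of all colors $0,\dots,d-1$, so $\Gamma$ is $(d+1)$-regular with respect to $d$ and $(\Gamma,\gamma)\in\mathbb{G}_d$; it is connected because a connected PL manifold is strongly connected; and $\mathcal{K}(\Gamma)=K'$ by construction, so $|\mathcal{K}(\Gamma)|=M$ and $\partial\Gamma$ represents $\partial M$. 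Since $|\mathcal{K}(\Gamma)|=M$, the orientability criterion recalled above (namely, $|\mathcal{K}(\Gamma)|$ is orientable if and only if $\Gamma$ is bipartite) shows that $\Gamma$ is bipartite exactly when $M$ is orientable, which proves the first assertion.

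For the second assertion I would prune this gem by repeatedly cancelling $1$-dipoles. By \cite{bcg01}, such a cancellation does not change $|\mathcal{K}(\Gamma)|$ --- so it preserves $M$, its number $h$ of boundary components, and whether $\Gamma$ is bipartite --- and cancelling a $1$-dipole of color $j$ merges two components of $\Gamma_{\hat{j}}$ while leaving $g(\Gamma_{\hat{c}})$ unchanged for all $c\neq j$; in particular the total $\sum_{c\in\Delta_d}g(\Gamma_{\hat{c}})$, which is the number of vertices of $\mathcal{K}(\Gamma)$, strictly decreases. If $g(\Gamma_{\hat{d}})>1$, then since $\Gamma$ is connected some $d$-colored edge joins two components of $\Gamma_{\hat{d}}$, and that edge is a $1$-dipole of color $d$ whose cancellation lowers $g(\Gamma_{\hat{d}})$. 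Similarly, whenever for some $c\in\Delta_{d-1}$ the graph $\Gamma_{\hat{c}}$ has more components than required --- witnessed by a $c$-colored edge between two equal-degree vertices lying in distinct components of $\Gamma_{\hat{c}}$ --- an admissible $1$-dipole of color $c$ is available, and cancelling it lowers $g(\Gamma_{\hat{c}})$. As the vertex count is a nonnegative integer that strictly decreases, the procedure terminates, and a terminal gem is by definition a crystallization of $M$ (contracted when $\partial M=\emptyset$, and otherwise satisfying $g(\Gamma_{\hat{d}})=1$ and $g(\Gamma_{\hat{c}})=h$ for every $c\in\Delta_{d-1}$).

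The step I expect to be the main obstacle is this termination analysis when $\partial M$ is disconnected: a color-$c$ dipole cancellation with $c\neq d$ must neither re-disconnect $\Gamma_{\hat{d}}$ nor amalgamate two components of $\Gamma_{\hat{c}}$ that belong to different boundary components of $M$, so the quantities $g(\Gamma_{\hat{c}})$ cannot be minimized one color at a time in an arbitrary order. The remedy is the Pezzana-type scheduling of moves of \cite{cg80,ga83}, together with the boundary bookkeeping of \cite{ccg17}: one first arranges $g(\Gamma_{\hat{c}})=h$ for each $c\neq d$ using dipoles chosen so as not to affect $\Gamma_{\hat{d}}$, and only afterwards contracts $\Gamma_{\hat{d}}$ to a single component; equivalently, one may first separate the gem along its boundary components and argue on each piece. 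Carrying this out --- or, more economically, invoking the existence theorems of \cite{cg80,ga83} and combining them with the dipole-invariance of bipartiteness noted above --- yields a crystallization of $M$ of the asserted orientation type.
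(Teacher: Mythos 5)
The paper itself offers no argument for this proposition: it is stated as the Pezzana existence theorem together with its boundary-case extensions, and the references \cite{pe74,cg80,ga83} (and \cite{ccg17} for the singular setting) are meant to carry it. Your first paragraph reproduces the standard construction underlying those results (the colored dual of the first barycentric subdivision, with vertices of $K'$ colored by dimension), and it is correct: the flag count showing that only the faces opposite $d$-colored vertices can be free, so that $(\Gamma,\gamma)\in\mathbb{G}_d$, the identification $|\mathcal{K}(\Gamma)|\cong M$, and the bipartite/orientable equivalence are all fine.

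The self-contained reduction to a crystallization in your second paragraph, however, has a genuine gap, and it is not just the scheduling issue you flag. The opening claim that cancelling \emph{any} $1$-dipole leaves $|\mathcal{K}(\Gamma)|$ unchanged is false for graphs with boundary: a $1$-dipole of color $c\in\Delta_{d-1}$ is guaranteed to preserve the represented polyhedron only when at least one of the two components of $\Gamma_{\hat{c}}$ involved represents $\mathbb{S}^{d-1}$ (i.e.\ one of the two $c$-labelled vertices being amalgamated is internal); when both residues represent $\mathbb{B}^{d-1}$ the move can change the space --- for instance it can tube together two boundary components when the two $c$-labelled vertices lie on different components of $\partial M$. (The paper is implicitly aware of this phenomenon: in the proof of Theorem \ref{thm:weightedgenus} cancelling $1$-dipoles of the singular color deliberately changes the singular manifold, connect-summing the boundary links.) Moreover, even after restricting to admissible cancellations, your termination argument does not yield the conclusion: if $g(\Gamma_{\hat{c}})>h$ it can happen that every $c$-labelled vertex has ball link, so no admissible $1$-dipole of color $c$ exists, and a ``terminal'' gem of your greedy procedure need not satisfy $g(\Gamma_{\hat{c}})=h$; reaching a crystallization requires first normalizing the boundary graph (crystallizing $\partial\Gamma$ and then working relative to it, as in \cite{cg80,ga83}), not merely choosing a good order of dipole cancellations. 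Your final fallback --- simply invoking the existence theorems of \cite{cg80,ga83} and noting that dipole moves and the constructions there respect bipartiteness --- is legitimate and is in effect exactly what the paper does; but the sketched dipole-reduction, taken as an independent proof of the second assertion, does not go through as written.
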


\begin{remark} [\cite{ccg20}] {\rm There is a bijection between the class of connected singular (PL) $d$-manifolds and the class of connected closed (PL) $d$-manifolds union with the class of connected compact (PL) $d$-manifolds with non-spherical boundary components. For, if $M$ is a singular $d$-manifold then removing small open neighborhood of each of its singular vertices (if possible), a compact $d$-manifold $\check{M}$ (with non spherical boundary components) is obtained. It is obvious that $M=\check{M}$ if and only if $M$ is a closed $d$-manifold.
 
 Conversely, if $M$ is a compact $d$-manifold with non spherical boundary components then a singular $d$-manifold
 $\widehat{M}$ is obtained by coning off each component of $\partial M$. If $M$ is a closed $d$-manifold then $M= \widehat{M}$.}
 \end{remark}

From now onwards, we mean `a connected compact PL $d$-manifold with non spherical boundary components' by the term `a manifold with boundary'. The boundary component can be empty as well.

 \begin{definition}
 A regular $(d+1)$-colored graph $(\Gamma,\gamma)$ is said to be a regular gem of a compact $d$-manifold $M$ with boundary if $\Gamma$ represents the associated singular $d$-manifold $\widehat{M}$.
 \end{definition}
 
For a colored graph $(\Gamma,\gamma)$, a color $c \in \Delta_d$ is said to be {\em singular}, if the vertices labeled by color $c$ in $\mathcal{K}(\Gamma)$ are singular. Let $M$ be a connected compact $d$-manifold with (non-empty) boundary. Then by Proposition \ref{prop:crys},  there exists a $(d+1)$-colored graph $(\Gamma,\gamma) \in \mathbb{G}_d$ representing $M$. For each component of $\partial \Gamma$, choose $c \in \Delta_ {d-1}$. Join an edge of color $d$ in $\Gamma$ between the two boundary vertices which lie on $\{c,d\}$-colored path in $\Gamma$. The resulting $(d+1)$-regular colored graph represents the singular $d$-manifold $\widehat{M}$ which is obtained by coning off each boundary component of $M$. The colors $c$ which were chosen for different components of $\partial \Gamma$ are  singular. If we choose a fixed color $c \in \Delta_ {d-1}$ for each boundary component then the resulting $(d+1)$-regular colored graph has exactly one color $c$ as a singular color. Replace $c$ and $d$ with each other. Then, we get a $(d+1)$-colored graph $(\tilde{\Gamma},\tilde{\gamma})$ with $d$ as a singular color and the colors in $\Delta_ {d-1}$ are non-singular.
  
 Let $\tilde{\mathbb{G}}_d$ denote the set of all $(d+1)$-regular colored graphs $(\Gamma,\gamma)$ such that vertices in $\mathcal{K}(\Gamma)$ labeled by color $c \in \Delta_{d-1}$ are non-singular and the vertices labeled by color $d$ are singular. Thus, from Proposition \ref{prop:crys}, we have the following result.
  
\begin{corollary}
 Any connected compact orientable (resp., non-orientable) PL $d$-manifold $M$ with boundary admits a regular bipartite (resp., non-bipartite) $(d+1)$-colored gem of $M$. Moreover, there is a regular $(d+1)$-colored gem of $M$ which lies in $\tilde{\mathbb{G}}_d$.
\end{corollary}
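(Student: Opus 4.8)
The plan is to deduce this statement directly from Proposition \ref{prop:crys} together with the explicit combinatorial construction described in the paragraph immediately preceding the Corollary. First I would recall from Proposition \ref{prop:crys} that $M$, being a compact orientable (resp., non-orientable) PL $d$-manifold with boundary, admits a $(d+1)$-colored graph $(\Gamma,\gamma)\in\mathbb{G}_d$ representing it, which is bipartite (resp., non-bipartite) precisely because $|\mathcal{K}(\Gamma)|$ is orientable if and only if $\Gamma$ is bipartite. The first point to verify is that the ``coning off'' operation preserves this parity: attaching a $d$-colored edge between the two boundary vertices lying on each $\{c,d\}$-colored path in $\Gamma$, for a fixed $c\in\Delta_{d-1}$, yields a $(d+1)$-regular colored graph $(\Gamma^{*},\gamma^{*})$, and since the added edges have color $d$ one checks that a proper $2$-coloring of the vertex set of $\Gamma$ (when it exists) extends to $\Gamma^{*}$ — the endpoints of a $\{c,d\}$-path have opposite colors, as the path alternates two colors and, by the crystallization structure for manifolds with boundary, has even length. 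Hence $\Gamma^{*}$ is bipartite if and only if $\Gamma$ is, so $(\Gamma^{*},\gamma^{*})$ is a regular bipartite (resp., non-bipartite) $(d+1)$-colored graph.

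Next I would argue that $(\Gamma^{*},\gamma^{*})$ is a \emph{gem} of the singular manifold $\hat{M}$: by the Remark, coning off each non-spherical boundary component of $M$ produces the singular manifold $\hat{M}$, and this is exactly the topological effect of the edge-addition, as already asserted in the preceding paragraph (``The resulting $(d+1)$-regular colored graph represents the singular $d$-manifold $\hat{M}$''). By the Definition of a regular gem of a compact $d$-manifold with boundary, $(\Gamma^{*},\gamma^{*})$ is then a regular $(d+1)$-colored gem of $M$, and it inherits the bipartiteness/non-bipartiteness just established. This proves the first assertion.

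For the ``moreover'' part, I would take the gem $(\Gamma^{*},\gamma^{*})$ constructed above using a \emph{single fixed} color $c\in\Delta_{d-1}$ for all boundary components. Then, as observed in the text, the only color that can be singular in $\mathcal{K}(\Gamma^{*})$ is $c$, while every other color in $\Delta_{d}$ is non-singular; a spherical link cannot be created for the other colors because away from $c$ the local structure is unchanged by the surgery. Finally, swapping the roles of the colors $c$ and $d$ via the obvious color permutation gives a $(d+1)$-colored graph $(\tilde{\Gamma},\tilde{\gamma})$ in which $d$ is the (only possible) singular color and $\Delta_{d-1}$ consists of non-singular colors, i.e., $(\tilde{\Gamma},\tilde{\gamma})\in\tilde{\mathbb{G}}_d$; permuting colors is a graph isomorphism on $\mathcal{K}$ and so preserves both the represented manifold and the bipartiteness. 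The only point requiring genuine care — and the step I expect to be the main obstacle — is the verification that the edge-addition operation indeed realizes $\hat{M}$ and that exactly the color $c$ (and no other) becomes singular; this is essentially a local computation of vertex links in $\mathcal{K}(\Gamma^{*})$, comparing the link of a $c$-labeled vertex (which becomes the cone on a boundary component, hence closed but non-spherical) with the links of the other vertices (which are unaffected), and it is exactly the content the paper has already stated in prose, so I would simply cite that paragraph rather than redo it.
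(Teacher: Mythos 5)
Your proposal follows essentially the same route as the paper, which deduces the corollary directly from Proposition \ref{prop:crys} together with the edge-addition (coning-off) construction and the final swap of the colors $c$ and $d$; the paper gives no further proof, so your added verifications simply make its implicit steps explicit. One small correction: a $\{c,d\}$-colored path joining two boundary vertices must begin and end with a $c$-colored edge (boundary vertices miss color $d$), so it has an \emph{odd} number of edges -- it is this oddness, not ``even length'' as you wrote, that places its endpoints in opposite bipartition classes and ensures the added $d$-colored edge preserves bipartiteness.
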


With this result, we have two representaions for a connected compact PL $d$-manifold $M$ with non spherical boundary components: one is a non-regular $(d+1)$-colored graph in $\mathbb{G}_d$ and the other is a regular $(d+1)$-colored graph (may not lie in  $\tilde{\mathbb{G}}_d$). Moreover, we  have another regular $(d+1)$-colored gem of $M$, which lies in $\tilde{\mathbb{G}}_d$.

\begin{definition}
Let $(\Gamma,\gamma)$ be a regular $(d+1)$-colored graph. If $\{\varepsilon^{(j)}| j=1,\dots,\frac{d!}{2} \}$ is the set of all permutations of $\Delta_d$ then the Gurau degree (or in short G-degree) $\omega_G (\Gamma)$ of $\Gamma$ is defined as 
$$\omega_G (\Gamma)=\sum_{j=1}^{\frac{d!}{2}} \rho_{\varepsilon^{(j)}}(\Gamma).$$
\end{definition} 
 Now, we define some PL-invariants combinatorially.

\begin{definition}\label{maindef}
Let $M$ be a compact connected PL $d$-manifold with boundary $(d \geq 2)$. Then the generalized regular genus $\bar{\mathcal G}(M)$ of $M$ is defined as 
$$\bar{\mathcal G}(M) = \min \{\rho(\Gamma) \ | \  (\Gamma,\gamma) \mbox{ is a regular gem of } M\};$$
the weighted regular genus $\tilde{G}(M)$ is defined as 
$$\tilde{\mathcal G}(M) = \min \{\rho(\Gamma) \ | \  (\Gamma,\gamma) \in \tilde{\mathbb{G}}_d \mbox{ is a regular gem of } M\};$$
the Gurau degree (G-degree in short) $\mathcal{D}_G (M)$ of $M$ is defined as
$$\mathcal{D}_G (M)=\min \{\omega_G (\Gamma) \ | \  (\Gamma,\gamma) \mbox{ is a regular gem of } M\};$$
the weighted Gurau degree (weighted G-degree in short) $\tilde{\mathcal{D}}_G (M)$ of $M$ is defined as
$$\tilde{\mathcal{D}}_G (M)=\min \{\omega_G (\Gamma) \ | \  (\Gamma,\gamma) \in \tilde{\mathbb{G}}_d  \mbox{ is a regular gem of } M\};$$

\end{definition}
If $M$ is a closed $d$-manifold then it is clear that $\mathcal G(M)=\tilde{\mathcal G}(M)=\bar{\mathcal G}(M)$ and $\tilde{\mathcal{D}}_G (M)= \mathcal{D}_G (M)$. Also, if $M$ is a $d$-manifold with at most one boundary component then it is easy to see that $\tilde{\mathcal G}(M)=\bar{\mathcal G}(M)$ and $\tilde{\mathcal{D}}_G (M)= \mathcal{D}_G (M)$. In general,  $\tilde{\mathcal G}(M) \geq \bar{\mathcal G}(M)$ and $\tilde{\mathcal{D}}_G (M) \geq \mathcal{D}_G (M)$.
 
 \begin{lemma}\label{lemma:gtilde}
 Let $(\Gamma,\gamma) \in \mathbb{G}_d$ represent a compact connected $d$-manifold $M$ with boundary. Choose an arbitrary fixed color $c \in \Delta_{d-1}$. Let $(\tilde{\Gamma},\tilde{\gamma})$ be the corresponding regular gem for $M$ with $c$ as a singular color. Let $\tilde{g}_{ij}$ and $\partial g_{kl}$ be the number of connected components of $\tilde{\Gamma}_{\{i,j\}}$ and $\partial \Gamma_{\{k,l\}}$ respectively. Then
 $$\tilde{g}_{id}=\dot{g}_{id}+ \partial g_{ic}, \mbox{ } i \neq c \in \Delta_{d-1},$$
 and
 $$\tilde{g}_{cd}=g_{cd}= \dot{g}_{cd}+ \bar{p},$$ where $2\bar{p}$ denotes the number of boundary vertices in $(\Gamma,\gamma).$
 \end{lemma}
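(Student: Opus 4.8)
The plan is to track precisely which edges are added in passing from $\Gamma$ to $\tilde\Gamma$ and how they affect the two relevant $2$-colored subgraphs. Recall that $\tilde\Gamma$ has the same vertices as $\Gamma$ and is obtained by adjoining, for each maximal $\{c,d\}$-colored path of $\Gamma$, one new edge of color $d$ joining the two (boundary) endpoints of that path; since the only missing edges at boundary vertices have color $d$, this makes $\tilde\Gamma$ $(d+1)$-regular. I would also use the defining property of the boundary graph: for each $j\in\Delta_{d-1}$, the $j$-colored edges of $\partial\Gamma$ are in bijection with the maximal $\{j,d\}$-colored paths of $\Gamma$, an edge joining the two boundary vertices that are the endpoints of the corresponding path. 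As $\partial\Gamma$ is a $d$-regular, $d$-colored graph on the $2\bar p$ boundary vertices, each of its color classes is a perfect matching of size $\bar p$, so $\Gamma$ has exactly $\bar p$ maximal $\{j,d\}$-colored paths for each $j\in\Delta_{d-1}$.

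First I would handle a color $i\in\Delta_{d-1}\setminus\{c\}$. In $\Gamma_{\{i,d\}}$ internal vertices have degree $2$ and boundary vertices degree $1$, so $\Gamma_{\{i,d\}}$ decomposes into its $\dot g_{id}$ regular components (cycles, all of whose vertices are internal) together with $\bar p$ paths whose endpoints are boundary vertices. In $\tilde\Gamma$ every vertex has both an $i$- and a $d$-colored edge, so $\tilde\Gamma_{\{i,d\}}$ is $2$-regular, i.e. a disjoint union of cycles; the $\dot g_{id}$ regular cycles are untouched since no new edge meets an internal vertex, while the $\bar p$ paths are spliced end-to-end by the new $d$-colored edges. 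The heart of the argument is to identify the cycles produced by this splicing: reading off only the boundary vertices visited along such a cycle of $\tilde\Gamma_{\{i,d\}}$, consecutive boundary vertices are joined alternately by a maximal $\{i,d\}$-colored path of $\Gamma$ (i.e. an $i$-colored edge of $\partial\Gamma$) and by a new $d$-colored edge, which by construction joins the two endpoints of a maximal $\{c,d\}$-colored path of $\Gamma$ (i.e. a $c$-colored edge of $\partial\Gamma$). Thus this trace is exactly an $\{i,c\}$-colored cycle of $\partial\Gamma$, and conversely each $\{i,c\}$-colored cycle of $\partial\Gamma$ is obtained from exactly one such cycle of $\tilde\Gamma_{\{i,d\}}$ by replacing its $i$-edges with the corresponding $\{i,d\}$-paths of $\Gamma$ and its $c$-edges with the corresponding new $d$-edges. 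Hence $\tilde g_{id}=\dot g_{id}+\partial g_{ic}$.

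For the pair $\{c,d\}$ the reasoning is shorter: each new $d$-colored edge joins the two endpoints of the very $\{c,d\}$-colored path to which it was attached, so it merely closes that path into a cycle, merging no components and meeting no internal vertex. Therefore $\tilde\Gamma_{\{c,d\}}$ consists of the $\dot g_{cd}$ regular cycles of $\Gamma_{\{c,d\}}$ together with one extra cycle for each of the $\bar p$ maximal $\{c,d\}$-colored paths, giving $\tilde g_{cd}=\dot g_{cd}+\bar p$. Since $\Gamma_{\{c,d\}}$ itself is the union of those same $\dot g_{cd}$ cycles and $\bar p$ paths, we also have $g_{cd}=\dot g_{cd}+\bar p$, and so $\tilde g_{cd}=g_{cd}=\dot g_{cd}+\bar p$.

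I expect the only genuinely delicate step to be the identification in the second paragraph of the spliced cycles of $\tilde\Gamma_{\{i,d\}}$ with the $\{i,c\}$-colored cycles of $\partial\Gamma$; everything else is an accounting of vertex degrees. Some care is needed for multiple edges of $\partial\Gamma$ — e.g. a $\{c,d\}$-path and an $\{i,d\}$-path sharing both endpoints yield a $2$-cycle in $\partial\Gamma_{\{i,c\}}$ — but the same trace argument matches such a configuration with a single component of $\tilde\Gamma_{\{i,d\}}$, so the count is unchanged.
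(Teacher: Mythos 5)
Your proposal is correct and follows essentially the same route as the paper's proof: the regular cycles of $\Gamma_{\{i,d\}}$ persist unchanged, the spliced cycles created by the new $d$-colored edges are identified with the $\{i,c\}$-colored cycles of $\partial\Gamma$ via the correspondence ($i$-edge of $\partial\Gamma$ $\leftrightarrow$ maximal $\{i,d\}$-path of $\Gamma$, $c$-edge of $\partial\Gamma$ $\leftrightarrow$ new $d$-edge), and the $\{c,d\}$-case is handled by noting each new edge merely closes its own path, with the count $\bar p$ coming from the $c$-colored perfect matching of $\partial\Gamma$. Your extra care with degree counts and with parallel edges in $\partial\Gamma$ only makes explicit what the paper leaves implicit.
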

 \begin{proof}
 Let  $i \neq c \in \Delta_{d-1}$. From the construction of $(\tilde{\Gamma},\tilde{\gamma})$, it is clear that any $\{i,d\}$-colored cycle in $(\Gamma,\gamma)$ is also a  $\{i,d\}$-colored cycle  in $(\tilde{\Gamma},\tilde{\gamma})$. But, there will be some additional  $\{i,d\}$-colored cycles in $(\tilde{\Gamma},\tilde{\gamma})$, and each of them is obtained by connecting  some $\{i,d\}$-colored walks in $(\Gamma,\gamma)$ by $d$-colored edges. We claim that these additional cycles in $(\tilde{\Gamma},\tilde{\gamma})$ have one-one correspondence with the $\{i,c\}$-colored cycles in $(\partial\Gamma,\partial\gamma)$, i.e.,  $\tilde{g}_{id}-\dot{g}_{id}=\partial g_{ic}$. From the construction of $(\tilde{\Gamma},\tilde{\gamma})$, it is easy to see that any $c$-colored edge between two vertices $u_1$ and $u_2$ in $\partial\Gamma$ corresponds to a $d$-colored edge between the vertices $u_1$ and $u_2$ in $\tilde \Gamma$ and any $i$-colored edge between two vertices $v_1$ and $v_2$ in $\partial\Gamma$ corresponds to a $\{i,d\}$-colored walk between the vertices $v_1$ and $v_2$ in $\tilde \Gamma$. Thus, there is a one-one correspondence between the $\{i,c\}$-colored cycles in $(\partial\Gamma,\partial\gamma)$ and the  $\{i,d\}$-colored cycles in $(\tilde{\Gamma},\tilde{\gamma})$ which is obtained by connecting some $\{i,d\}$-colored walks in $(\Gamma,\gamma)$ by $d$-colored edges. Thus, $\tilde{g}_{id}=\dot{g}_{id}+ \partial g_{ic}$.
 
 From the construction of $(\tilde{\Gamma},\tilde{\gamma})$, it is clear that a $\{c,d\}$-colored cycle  in $(\tilde{\Gamma},\tilde{\gamma})$ is either a $\{c,d\}$-colored cycle in $(\Gamma,\gamma)$ or is obtained from a $\{c,d\}$-colored walk in $(\Gamma,\gamma)$ by adding an edge of color $d$ between the boundary vertices. Therefore, $\tilde{g}_{cd}=g_{cd}$. Further, each $\{c,d\}$-colored walk in $(\Gamma,\gamma)$ corresponds to a $c$-colored edge in $(\partial\Gamma,\partial\gamma)$. Since the number of boundary vertices in  $(\Gamma,\gamma)$  is $2\bar{p}$, the number of $c$-colored edge in $(\partial\Gamma,\partial\gamma)$ is $\bar{p}$. Therefore,
 $\tilde{g}_{cd}=g_{cd}= \dot{g}_{cd}+ \bar{p}$.
  \end{proof}
  
 \begin{corollary}\label{cor:rho}
  Let $(\Gamma,\gamma) \in \mathbb{G}_d$ represent a compact connected $d$-manifold $M$ with boundary. Choose an arbitrary fixed color $c \in \Delta_{d-1}$. Let $(\tilde{\Gamma},\tilde{\gamma})$ be the corresponding regular gem for $M$ with $c$ as a singular color. Then for a permutation  $\varepsilon=(\varepsilon_0,\varepsilon_1,\dots,\varepsilon_d =d)$ of $\Delta_d$, we have
  \begin{enumerate}[$(i)$] 
   \item $\rho_\varepsilon (\tilde{\Gamma})=\rho_\varepsilon (\Gamma)$ if $c \in \{\varepsilon_0,\varepsilon_{d-1} \}$,
\item $\rho_ \varepsilon (\tilde{\Gamma})=\rho_\varepsilon (\Gamma)+\partial g_{\varepsilon_0\varepsilon_{d-1}}-\partial g_{\varepsilon_0 \varepsilon_{d-1} c}$ if  $c \notin \{\varepsilon_0,\varepsilon_{d-1} \}$, where $\partial g_{i_0\cdots i_k}=g(\partial \Gamma_{\{{i_0\cdots i_k}\}})$.
  \end{enumerate} 
 \end{corollary}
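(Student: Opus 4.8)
The plan is to derive Corollary~\ref{cor:rho} directly from Lemma~\ref{lemma:gtilde} by writing the genus formula for $(\tilde\Gamma,\tilde\gamma)$ as a regular $(d+1)$-colored graph representing the singular manifold $\hat M$, and comparing it termwise with the genus formula for $(\Gamma,\gamma)\in\mathbb G_d$ representing $M$ with boundary. Recall that for the regular gem one has
\[
2-2\rho_\varepsilon(\tilde\Gamma)=\sum_{i\in\mathbb Z_{d+1}}\tilde g_{\varepsilon_i\varepsilon_{i+1}}+(1-d)\,\tilde p,
\]
where $2\tilde p=2p$ is the number of vertices of $\tilde\Gamma$ (which equals the vertex count of $\Gamma$, since passing to $(\tilde\Gamma,\tilde\gamma)$ only adds $d$-colored edges among existing boundary vertices, not new vertices), while for $(\Gamma,\gamma)$ with boundary one has
\[
2-2\rho_\varepsilon(\Gamma)=\sum_{i\in\mathbb Z_{d+1}}\dot g_{\varepsilon_i\varepsilon_{i+1}}+(1-d)\,\dot p+(2-d)\,\bar p+\partial g_{\varepsilon_0\varepsilon_{d-1}}.
\]
Subtracting these two expressions reduces the problem to a bookkeeping identity relating $\sum_i\tilde g_{\varepsilon_i\varepsilon_{i+1}}$, $\sum_i\dot g_{\varepsilon_i\varepsilon_{i+1}}$, the term $(2-d)\bar p$, the $p=\dot p+\bar p$ correction in the first term, and $\partial g_{\varepsilon_0\varepsilon_{d-1}}$.

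The key computation is to see which pairs $\{\varepsilon_i,\varepsilon_{i+1}\}$ in the cyclic word $\varepsilon$ contain the color $d=\varepsilon_d$: precisely the two pairs $\{\varepsilon_{d-1},\varepsilon_d\}=\{\varepsilon_{d-1},d\}$ and $\{\varepsilon_d,\varepsilon_0\}=\{d,\varepsilon_0\}$. For all other pairs, $\tilde g_{\varepsilon_i\varepsilon_{i+1}}=\dot g_{\varepsilon_i\varepsilon_{i+1}}$ since those color classes are untouched by the construction (no new edges of colors in $\Delta_{d-1}$ are added, and the $\{c,d\}$ identifications don't affect pairs avoiding $d$). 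So the only discrepancy in the sums comes from the two pairs involving $d$, where I apply Lemma~\ref{lemma:gtilde}: if neither $\varepsilon_{d-1}$ nor $\varepsilon_0$ equals $c$, then $\tilde g_{\varepsilon_{d-1}d}=\dot g_{\varepsilon_{d-1}d}+\partial g_{\varepsilon_{d-1}c}$ and $\tilde g_{\varepsilon_0 d}=\dot g_{\varepsilon_0 d}+\partial g_{\varepsilon_0 c}$; if one of them, say $\varepsilon_0$, equals $c$, then instead $\tilde g_{\varepsilon_0 d}=\tilde g_{cd}=\dot g_{cd}+\bar p$ while $\tilde g_{\varepsilon_{d-1}d}=\dot g_{\varepsilon_{d-1}d}+\partial g_{\varepsilon_{d-1}c}$. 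One also needs the adjoining pair on the other side: when $\varepsilon_0=c$, the pair $\{\varepsilon_{d-1},\varepsilon_d\}$ contains $d$ but not $c$ so it gets the $\partial g_{\varepsilon_{d-1}c}$ correction, and when $\varepsilon_{d-1}=c$, symmetrically the pair $\{d,\varepsilon_0\}$ gets the $\partial g_{\varepsilon_0 c}$ correction. A small subtlety: one must also track that $\partial g_{\varepsilon_0\varepsilon_{d-1}c}$ makes sense as $g(\partial\Gamma_{\{\varepsilon_0,\varepsilon_{d-1},c\}})$ and that when $c\in\{\varepsilon_0,\varepsilon_{d-1}\}$ this three-color count degenerates to $\partial g_{\varepsilon_0\varepsilon_{d-1}}$, which is exactly why case $(i)$ collapses.

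Carrying this out: in case $(i)$, WLOG $\varepsilon_0=c$ (the case $\varepsilon_{d-1}=c$ is symmetric by reversing the cyclic order). Then $\sum_i\tilde g_{\varepsilon_i\varepsilon_{i+1}}-\sum_i\dot g_{\varepsilon_i\varepsilon_{i+1}}=\bar p+\partial g_{\varepsilon_{d-1}c}$, and since $\tilde p=\dot p+\bar p$ the difference of the ``$(1-d)$'' terms is $(1-d)\bar p$. Adding these and comparing with the boundary formula, the $\bar p$-terms combine as $\bar p+(1-d)\bar p=(2-d)\bar p$, which cancels against $(2-d)\bar p$ in the boundary formula, and $\partial g_{\varepsilon_{d-1}c}=\partial g_{\varepsilon_{d-1}\varepsilon_0}$ (since $\varepsilon_0=c$) cancels $\partial g_{\varepsilon_0\varepsilon_{d-1}}$; hence $\rho_\varepsilon(\tilde\Gamma)=\rho_\varepsilon(\Gamma)$. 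In case $(ii)$, $c\notin\{\varepsilon_0,\varepsilon_{d-1}\}$: now the sum difference is $\partial g_{\varepsilon_0 c}+\partial g_{\varepsilon_{d-1}c}$, the $(1-d)$-term difference is again $(1-d)\bar p$, and one needs to match the leftover $(2-d)\bar p+\partial g_{\varepsilon_0\varepsilon_{d-1}}$ from the boundary formula. Here the identity $\partial g_{\varepsilon_0 c}+\partial g_{\varepsilon_{d-1}c}-\bar p = \partial g_{\varepsilon_0\varepsilon_{d-1}}+\partial g_{\varepsilon_0\varepsilon_{d-1}c}-\bar p$ is what is needed; more precisely, I will use the Euler-characteristic-type relation on the $2$-complex structure of $\partial\Gamma$ restricted to colors $\{\varepsilon_0,\varepsilon_{d-1},c\}$, namely that in the $(d-1)$-colored boundary graph the three bicolored cycle-counts and the tricolored count satisfy $\partial g_{\varepsilon_0\varepsilon_{d-1}}+\partial g_{\varepsilon_0 c}+\partial g_{\varepsilon_{d-1}c}=\#\{\text{vertices}\}+g(\partial\Gamma_{\{\varepsilon_0,\varepsilon_{d-1},c\}})\cdot(\text{something})$...

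I expect the genuine obstacle to be exactly this last step: rather than invoking a delicate genus identity for the boundary graph, the cleanest route is to note that $(\tilde\Gamma,\tilde\gamma)$ and $(\Gamma,\gamma)$ give \emph{the same} regular embedding surface combinatorics except along the color-$d$ edges, so I should instead directly compare the two regular imbeddings region-by-region: a boundary region of $(\Gamma,\gamma)$ bounded by an $\{\varepsilon_0,\varepsilon_{d-1}\}$-colored boundary walk becomes, in $(\tilde\Gamma,\tilde\gamma)$, fused with neighbouring regions across the newly added $d$-edges, and the net change in the region count equals $\partial g_{\varepsilon_0\varepsilon_{d-1}}-\partial g_{\varepsilon_0\varepsilon_{d-1}c}$ precisely when $c\notin\{\varepsilon_0,\varepsilon_{d-1}\}$ and is zero otherwise. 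Since the vertex and edge counts of the embedded graph change in a controlled way (we add $\bar p$ edges of color $d$ and, in the $\Gamma'$ versus $\tilde\Gamma$ comparison, the same number of cone-vertices disappear), Euler's formula $2-2\rho_\varepsilon=V-E+F$ then yields the stated formulas after the cancellations above. I will present the proof via this region-counting comparison, using Lemma~\ref{lemma:gtilde} to pin down the $F$-count change and the definition of $\rho_\varepsilon$ via the genus formula to conclude, treating cases $(i)$ and $(ii)$ according to whether $c$ appears among $\{\varepsilon_0,\varepsilon_{d-1}\}$.
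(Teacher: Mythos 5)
Your overall strategy is the paper's own: write the genus formula for $\tilde\Gamma$ (regular graph, $2p$ vertices) and for $\Gamma$ (with boundary), and substitute via Lemma~\ref{lemma:gtilde}, noting that only the two pairs containing $d$ change. Your case $(i)$ is carried out correctly and matches the paper. The gap is in case $(ii)$. After the substitution $\sum_i\tilde g_{\varepsilon_i\varepsilon_{i+1}}-\sum_i\dot g_{\varepsilon_i\varepsilon_{i+1}}=\partial g_{\varepsilon_0 c}+\partial g_{c\varepsilon_{d-1}}$, what remains to be proved is exactly the identity
$$\partial g_{\varepsilon_0\varepsilon_{d-1}}+\partial g_{\varepsilon_0 c}+\partial g_{c\varepsilon_{d-1}}=2\,\partial g_{\varepsilon_0\varepsilon_{d-1}c}+\bar p,$$
and this is the step you never establish: the relation you write down, $\partial g_{\varepsilon_0 c}+\partial g_{\varepsilon_{d-1}c}-\bar p=\partial g_{\varepsilon_0\varepsilon_{d-1}}+\partial g_{\varepsilon_0\varepsilon_{d-1}c}-\bar p$, is not the needed one (it agrees with it only when $\bar p+\partial g_{\varepsilon_0\varepsilon_{d-1}c}=2\partial g_{\varepsilon_0\varepsilon_{d-1}}$), and the Euler-characteristic relation you begin to invoke is left as an ellipsis with an unspecified term. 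The fallback ``region-by-region'' comparison does not repair this: the assertion that the net change in the number of regions equals $\partial g_{\varepsilon_0\varepsilon_{d-1}}-\partial g_{\varepsilon_0\varepsilon_{d-1}c}$ is equivalent to the displayed identity, so at the single non-trivial point of case $(ii)$ you assert the conclusion rather than prove it.

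The identity does hold, and it is how the paper closes the argument (its line ``$\partial g_{ij}+\partial g_{jk}+\partial g_{ik}=2+\bar p$'' is used in the form with $2\partial g_{ijk}$ on the right). The reason is that each connected component of $\partial\Gamma_{\{\varepsilon_0,\varepsilon_{d-1},c\}}$ is a $3$-colored graph representing the link of a simplex of ${\mathcal K}(\partial\Gamma)$ in the closed $(d-1)$-manifold $\partial M$ (for $d=4$, the case used in the rest of the paper, a vertex link in the closed $3$-manifold $\partial M$), hence a $2$-sphere; for a $3$-colored graph with $2q$ vertices representing $S^2$, the three bicolored cycle counts sum to $q+2$, and summing over the $\partial g_{\varepsilon_0\varepsilon_{d-1}c}$ components, whose vertex numbers add up to $2\bar p$, gives the identity. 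With it, your computation closes:
$$2\rho_\varepsilon(\tilde\Gamma)-2\rho_\varepsilon(\Gamma)=\bar p+\partial g_{\varepsilon_0\varepsilon_{d-1}}-\partial g_{\varepsilon_0 c}-\partial g_{c\varepsilon_{d-1}}=2\partial g_{\varepsilon_0\varepsilon_{d-1}}-2\partial g_{\varepsilon_0\varepsilon_{d-1}c},$$
which is part $(ii)$. So the missing ingredient is precisely this sphere-link count for the boundary graph; once you add it, your first (algebraic) route succeeds and the detour through regular embeddings is unnecessary.
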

 
 \begin{proof} 
\begin{align*}
2\rho_ \varepsilon(\Gamma) &= 2-\sum_{i \in \mathbb{Z}_{d+1}}\dot{g} _{\varepsilon_i\varepsilon_{i+1}} - (1-d) \ \dot{p} - (2-d) \ \overline{p} -\partial g_{\varepsilon_0\varepsilon_{d-1}}\\
&=  2-(\sum_{i \in \mathbb{Z}_{d+1}}\tilde{g} _{\varepsilon_i\varepsilon_{i+1}} - \overline{p} -\partial g_{\varepsilon_0\varepsilon_{d-1}})- (1-d) \ \dot{p} - (2-d) \ \overline{p}  -\partial g_{\varepsilon_0\varepsilon_{d-1}}\\
&= 2-\sum_{i \in \mathbb{Z}_{d+1}}\tilde{g} _{\varepsilon_i\varepsilon_{i+1}}- (1-d) \ \dot{p} - (1-d) \ \overline{p}= 2-\sum_{i \in \mathbb{Z}_{d+1}}\tilde{g} _{\varepsilon_i\varepsilon_{i+1}}- (1-d) \ p=2\rho_ \varepsilon(\tilde{\Gamma}).
\end{align*}
This proves Part $(i)$. Further for any $i,j,k\in\Delta_{d-1}$, $\partial g_{ij}+\partial g_{jk}+\partial g_{ik}=2+\bar p$. Thus,
\begin{align*}
2\rho_ \varepsilon(\Gamma) &= 2-\sum_{i \in \mathbb{Z}_{d+1}}\dot{g} _{\varepsilon_i\varepsilon_{i+1}} - (1-d) \ \dot{p}  - (2-d) \ \overline{p}  -\partial g_{\varepsilon_0\varepsilon_{d-1}}\\
 &=  2-(\sum_{i \in \mathbb{Z}_{d+1}}\tilde{g} _{\varepsilon_i\varepsilon_{i+1}} -\partial g_{\varepsilon_0 c}- \partial g_{c \varepsilon_{d-1}})- (1-d) \ p - \overline{p} -\partial g_{\varepsilon_0\varepsilon_{d-1}}\\
&= 2-\sum_{i \in \mathbb{Z}_{d+1}}\tilde{g} _{\varepsilon_i\varepsilon_{i+1}} +(\partial g_{\varepsilon_0 c}+ \partial g_{c \varepsilon_{d-1}}+ \partial g_{\varepsilon_0 \varepsilon_{d-1}})- (1-d) \ p - \overline{p} -2\partial g_{\varepsilon_0\varepsilon_{d-1}}\\
 &=2-\sum_{i \in \mathbb{Z}_{d+1}}\tilde{g} _{\varepsilon_i\varepsilon_{i+1}} +2 \partial g_{\varepsilon_0 \varepsilon_{d-1} c}- (1-d) \ p -2\partial g_{\varepsilon_0\varepsilon_{d-1}}\\
 &= 2\rho_ \varepsilon(\tilde{\Gamma})+2 \partial g_{\varepsilon_0 \varepsilon_{d-1} c} -2\partial g_{\varepsilon_0\varepsilon_{d-1}}.\\
 \end{align*}
This proves Part $(ii)$.
\end{proof}

 \begin{theorem}\label{thm:genrelweighted}
 Let $M$ be a compact connected $d$-manifold with boundary. Then, $\mathcal G(M) \geq \tilde{\mathcal G}(M) \geq \bar{\mathcal G}(M)$ and $\tilde{\mathcal{D}_G} (M)\geq \mathcal{D}_G (M)$.
 \end{theorem}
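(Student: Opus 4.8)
The plan is to prove each of the three inequalities by exhibiting, for every gem of $M$ in the more restrictive class, a gem of $M$ in the less restrictive class whose regular genus (resp.\ G-degree) is no larger; taking minima then yields the claim. For the chain $\mathcal G(M)\geq\tilde{\mathcal G}(M)\geq\bar{\mathcal G}(M)$, the second inequality is immediate from the definitions, since every $(\Gamma,\gamma)\in\tilde{\mathbb G}_d$ that is a regular gem of $M$ is in particular a regular gem of $M$, so the minimum defining $\bar{\mathcal G}(M)$ is taken over a larger set and is therefore $\le\tilde{\mathcal G}(M)$.

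For the first inequality $\mathcal G(M)\geq\tilde{\mathcal G}(M)$, I would start from an arbitrary $(\Gamma,\gamma)\in\mathbb G_d$ representing $M$ and a cyclic permutation $\varepsilon=(\varepsilon_0,\dots,\varepsilon_d)$ realizing $\rho(\Gamma)=\rho_\varepsilon(\Gamma)$. After relabeling colors we may assume $\varepsilon_d=d$, i.e.\ the regular color of $\Gamma$ is the last entry of $\varepsilon$; this is legitimate because any color of $\Delta_d$ can play the role of the regular color (as noted after Proposition~\ref{prop:crys}, one may assume the distinguished color is $d$). Now choose the singular color $c\in\Delta_{d-1}$ to be $c=\varepsilon_0$ (or $c=\varepsilon_{d-1}$), and let $(\tilde\Gamma,\tilde\gamma)\in\tilde{\mathbb G}_d$ be the corresponding regular gem of $M$ produced by the construction preceding Lemma~\ref{lemma:gtilde}. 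By Corollary~\ref{cor:rho}$(i)$, since $c\in\{\varepsilon_0,\varepsilon_{d-1}\}$ we get $\rho_\varepsilon(\tilde\Gamma)=\rho_\varepsilon(\Gamma)=\rho(\Gamma)$. Hence $\tilde{\mathcal G}(M)\le\rho(\tilde\Gamma)\le\rho_\varepsilon(\tilde\Gamma)=\rho(\Gamma)$. Taking the minimum over all $(\Gamma,\gamma)\in\mathbb G_d$ representing $M$ gives $\tilde{\mathcal G}(M)\le\mathcal G(M)$.

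For $\tilde{\mathcal D}_G(M)\geq\mathcal D_G(M)$, the argument is again set-theoretic: every $(\Gamma,\gamma)\in\tilde{\mathbb G}_d$ that is a regular gem of $M$ is a regular gem of $M$, so $\mathcal D_G(M)=\min\{\omega_G(\Gamma)\}$ over the larger class is $\le\tilde{\mathcal D}_G(M)$. I expect the only genuinely delicate point to be the first inequality, specifically the step of simultaneously arranging that the singular color $c$ lies in $\{\varepsilon_0,\varepsilon_{d-1}\}$ for a genus-minimizing $\varepsilon$: one must check that the color-relabeling permitted on $\Gamma$ (to put the regular color into position $d$) is compatible with freely choosing $c$ among the remaining colors, and that the construction of Lemma~\ref{lemma:gtilde}/Corollary~\ref{cor:rho} applies verbatim after this relabeling. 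Once that bookkeeping is in place, Corollary~\ref{cor:rho}$(i)$ does all the real work, and the rest is routine.
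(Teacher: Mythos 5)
Your proposal is correct and follows essentially the same route as the paper: the inequalities $\tilde{\mathcal G}(M)\geq\bar{\mathcal G}(M)$ and $\tilde{\mathcal D}_G(M)\geq\mathcal D_G(M)$ by comparing the defining minima, and $\mathcal G(M)\geq\tilde{\mathcal G}(M)$ by choosing the singular color $c\in\{\varepsilon_0,\varepsilon_{d-1}\}$ for a genus-minimizing permutation $\varepsilon=(\varepsilon_0,\dots,\varepsilon_d=d)$ and invoking Corollary~\ref{cor:rho}$(i)$. The only step the paper spells out that you fold into ``bookkeeping'' is that the gem to which Corollary~\ref{cor:rho} applies has $c\neq d$ as its singular color, so it is not yet in $\tilde{\mathbb{G}}_d$; one must then interchange the colors $c$ and $d$ (and correspondingly adjust $\varepsilon$), which preserves the regular genus and produces the required gem in $\tilde{\mathbb{G}}_d$.
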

 
 \begin{proof}
 From Definition \ref{maindef}, it is clear that $\tilde{\mathcal G}(M) \geq \bar{\mathcal G}(M)$ and $\tilde{\mathcal{D}_G} (M)\geq \mathcal{D}_G (M)$. Let $(\Gamma,\gamma) \in \mathbb{G}_d$ be a non-regular graph such that the regular genus of $M$ is assumed by the regular genus of $\Gamma$. Let the regular genus $\rho(\Gamma)$ of $\Gamma$ be attained with respect to the cyclic permutation $\varepsilon=(\varepsilon_0,\dots,\varepsilon_d=d)$ of $\Delta_d$. Let $(\tilde{\Gamma},\tilde{\gamma})$ be the corresponding regular gem for $M$ with $c \in \{ \varepsilon_0,\varepsilon_{d-1}\}$ as a singular color. Then by Corollary \ref{cor:rho}, $\rho_\varepsilon (\tilde{\Gamma})=\rho_\varepsilon (\Gamma)=\mathcal G(M)$. Let $\varepsilon'$ be the permutation by interchanging the colors $c$ and $d$ in $\varepsilon$ and $(\tilde{\Gamma}',\tilde{\gamma}')$ be the graph by interchanging the colors $c$ and $d$ in $(\tilde{\Gamma},\tilde{\gamma})$. Then $(\tilde{\Gamma}',\tilde{\gamma}')$ is a regular gem of $M$ in $\tilde{\mathbb{G}}_d$,  and $\rho_{\varepsilon'} (\tilde{\Gamma}')= \mathcal G(M)$. Therefore,  $\mathcal G(M) \geq \tilde{\mathcal G}(M)$.
  \end{proof}
 
\begin{proposition} [\cite{cc19, ch86}] \label{prop:rank}
Let $(\Gamma,\gamma)$ be a $(d+1)$-colored graph representing the singular $d$-manifold M and the associated compact $d$-manifold $\check{M}$. Let $X_{ij}$ and $R_{ij}$ be the sets which are in bijective correspondence to the connected components of $\Gamma_{\Delta_d \setminus \{i,j\}}$ and $\Gamma_{ij}$ respectively. Let $\bar{R}_{ij}\subset X_{ij}$ corresponds to a maximal tree of the subcomplex $K_{ij}$ of $\mathcal{K}(\Gamma)$ (consisting only of vertices labeled by $i$ and $j$, and edges connecting them). Then,
\begin{enumerate}[$(a)$] 
\item if $i,j\in \Delta_d$ are not singular in $\Gamma$,
$$\pi_1 (\check{M})=<X_{ij}/ R_{ij} \cup \bar{R}_{ij}>,$$
\item if no color in $\Delta_d \setminus \{ i,j\}$ is singular in $\Gamma$. Then
$$\pi_1 (M)=<X_{ij}/ R_{ij} \cup \bar{R}_{ij}>.$$
\end{enumerate}
\end{proposition}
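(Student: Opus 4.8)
The plan is to realise the right-hand side as the fundamental group of an explicit $2$-complex built from the colour pair $\{i,j\}$, in the spirit of Gagliardi's classical presentation of $\pi_1$ of a crystallised closed manifold, with the combinatorial bookkeeping arranged so as to keep track of which singular vertices survive. A useful first move is to unify the two cases: for a pair $\{i,j\}$ let $M^{ij}$ be the pseudomanifold obtained from $\mathcal{K}(\Gamma)$ by coning off (filling in the open stars of) exactly those singular vertices whose colour is $i$ or $j$ and leaving the other singular vertices untouched; equivalently, $M^{ij}$ is $\check M$ with the boundary components that were coned off at $i$- or $j$-coloured singular vertices filled back in. If $i,j$ are non-singular then $M^{ij}=\check M$, and if no colour of $\Delta_d\setminus\{i,j\}$ is singular then $M^{ij}=M$, so it suffices to prove, for every pair $\{i,j\}$, that $\pi_1(M^{ij})=\langle X_{ij}\mid R_{ij}\cup\bar R_{ij}\rangle$; parts $(a)$ and $(b)$ are then the two extreme cases.

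For the construction, pass to the first barycentric subdivision $\mathcal{K}'$ and let $P_{ij}\subseteq\mathcal{K}'$ be the subcomplex spanned by the barycentres of $(1)$ the $0$-simplices of $\mathcal{K}(\Gamma)$ coloured $i$ or $j$, $(2)$ the edges coloured $\{i,j\}$, $(3)$ the $(d-2)$-simplices coloured $\Delta_d\setminus\{i,j\}$, together with the simplices of $\mathcal{K}'$ joining the barycentre of a $\{i,j\}$-edge to that of an incident vertex, and those joining the barycentre of a $(d-2)$-simplex coloured $\Delta_d\setminus\{i,j\}$ to the barycentre of a complementary $\{i,j\}$-edge inside a common $d$-simplex. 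By the simplex/cell dictionary recalled in the Preliminaries, the $1$-skeleton of $P_{ij}$ is a subdivision of the graph $K_{ij}$ (vertices: the $i$- and $j$-coloured $0$-simplices; edges: the set $X_{ij}$), and the $2$-cells of $P_{ij}$ are indexed by $R_{ij}$, each attached along the closed edge-path of $K_{ij}$ obtained by listing, in link order, the $\{i,j\}$-edges complementary to the corresponding $(d-2)$-simplex. Reading $\pi_1$ off this $2$-complex in the usual way — contract a maximal tree $\bar R_{ij}$ of $K_{ij}$, take the remaining edges $X_{ij}$ as generators, impose one relator per $2$-cell — gives $\pi_1(P_{ij})=\langle X_{ij}\mid R_{ij}\cup\bar R_{ij}\rangle$. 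Hence the entire weight of the proof falls on the claim that $P_{ij}$ is a \emph{spine} of $M^{ij}$, i.e. $M^{ij}$ deformation retracts onto $P_{ij}$.

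To establish this I would collapse $\mathcal{K}'$ onto $P_{ij}$: order the simplices lying outside $P_{ij}$ and delete them in free-face pairs, the existence of a free face of the correct combinatorial type at each stage being supplied by the manifold condition (the link of every positive-dimensional simplex of $\mathcal{K}(\Gamma)$ is a PL sphere). Three points need care. Near a non-singular vertex the argument is purely local and is the classical one. A singular vertex $v$ coloured $i$ or $j$ lies in $P_{ij}$, and its dual cell — a cone on the closed $(d-1)$-manifold $\operatorname{lk}(v)$ — collapses onto the cone point $v\in P_{ij}$, so such vertices are absorbed; this produces the ``cone these off'' behaviour of $M^{ij}$. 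Finally, a singular vertex $v$ coloured by some $k\in\Delta_d\setminus\{i,j\}$ is not in $P_{ij}$, yet every $(d-2)$-simplex contributing a $2$-cell contains such a $v$, so the collapse must be carried out away from these vertices; doing it in $M^{ij}$, which has exactly the open (dual) stars of the $k$-coloured singular vertices removed, is precisely what is needed, and this produces the ``$\check M$-like'' behaviour there.

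The real obstacle is showing that these last two mechanisms coexist cleanly in a single collapse — in particular, that deleting the open stars of the $k$-coloured singular vertices does not obstruct the collapse around the rest of the complex. I expect this is handled by performing the collapse first on a triangulation of $\check M$ (a compact manifold all of whose vertex links are balls or spheres, where the classical collapsibility bookkeeping applies) and then coning back the $\{i,j\}$-coloured boundary components. An alternative that avoids collapsibility altogether, which I would keep in reserve, is to use the dual cell decomposition directly: its $2$-skeleton is the graph $\Gamma$ with a $2$-cell glued along each bicoloured cycle, and since cells of dimension $\ge 3$ do not change $\pi_1$ — apart from the cones at singular vertices, which are treated separately — its fundamental group equals $\pi_1(\check M)$; one then reduces that presentation by Tietze transformations eliminating all edges of colours other than $i$ and $j$. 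This is essentially the route of \cite{cc19,ch86}, which for the purposes of the present paper it is enough to invoke.
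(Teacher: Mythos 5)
The paper itself offers no proof of this proposition: it is imported from \cite{cc19,ch86}, so the only meaningful comparison is with the argument in those references, which is essentially the ``reserve'' route you sketch at the end (dual cell decomposition, the $2$-skeleton carries $\pi_1$, Tietze elimination of the colours other than $i,j$, separate treatment of the cones at singular vertices). Your combinatorial dictionary (components of $\Gamma_{\Delta_d\setminus\{i,j\}}$ are the $\{i,j\}$-coloured edges, components of $\Gamma_{ij}$ are the $(d-2)$-simplices coloured $\Delta_d\setminus\{i,j\}$, whose links give the relator words) and your unified space $M^{ij}$ interpolating between cases $(a)$ and $(b)$ are set up correctly and agree with the general form of the statement in the literature.

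However, the argument you actually propose to write down has a genuine flaw at its central step: $P_{ij}$ is in general \emph{not} a spine of $M^{ij}$, and the collapse of $\mathcal{K}'$ onto it cannot exist. When $M^{ij}$ is closed (e.g.\ case $(b)$ with $\Gamma$ a gem of a closed manifold, or whenever every singular vertex is coloured $i$ or $j$), the complex has no free face at all, so the collapse cannot even start, and no closed $d$-manifold with $d\geq 3$ deformation retracts onto a $2$-complex. Even when $M^{ij}$ has non-empty boundary, a deformation retraction onto a $2$-dimensional complex would force $H_k(M^{ij})=0$ for all $k\geq 3$, which fails in simple examples (already for a product of a closed $3$-manifold with an interval). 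What is true, and is all that the proposition needs, is the weaker statement that the inclusion of $P_{ij}$ induces an isomorphism on $\pi_1$: $M^{ij}$ is obtained from a regular neighbourhood of $P_{ij}$ by attaching the dual cells of dimension at least $3$ (these are balls, because links of positive-dimensional simplices are spheres and the surviving vertex links are spheres or have been removed), together with the cones over the links of the $i$- and $j$-coloured singular vertices; attaching cells of index $\geq 3$ does not change $\pi_1$, and the genuinely delicate point --- the actual content of Chiavacci's extension and of \cite{cc19} --- is to check that the relations imposed by those cones are already consequences of $R_{ij}$. So the spine/collapsibility claim should be dropped and replaced by this cell-attachment argument, i.e.\ by your fallback; at citation level, simply invoking \cite{cc19,ch86}, as the paper does, is of course sufficient.
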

 
 From now onwards, we shall focus on connected compact $4$-manifolds with non-spherical boundary components.

\begin{remark}\label{rem:chi}
{\rm
Let $(\Gamma,\gamma)$ be a regular $5$-colored graph. Then the Euler characteristic of the corresponding simplicial cell complex does not change by canceling $1$-dipole in $\Gamma$.
Because with the cancellation of one $1$-dipole in $\Gamma$, the number of $f$-vectors in the corresponding simplicial cell complex are reduced by
$$f_0=1, f_1=4,f_2=6,f_3=5,f_4=2,$$
and hence the result follows. }
\end{remark}

\begin{remark}\label{rem:plush}
{\rm
Let $M$ be a compact $4$-manifold with boundary such that $\partial^1 M,\dots,\partial^h M$ are components of $\partial M$. Let $\widehat{M}$ be the singular $4$-manifold obtained by coning off each component of $\partial M$. Let $\mathbb{H}_ {\partial^i M}$ denote the cone over $\partial^i M$ for $i \in \{ 1,2,\dots,h \}$. Then $\chi(\mathbb{H}_ {\partial^i M} )=1$.
Since $\partial M $ is a $3$-manifold, $\chi(\partial M)=0$. Therefore, 
 $\chi(\widehat{M})=\chi(M)+ \sum_ {i=1} ^ h \chi(\mathbb{H}_ {\partial^i M} )- \chi(\partial M)$ = $\chi(M)+h$.}
 \end{remark}
 
 \begin{remark}\label{rem:genus}
 {\rm
Let $(\Gamma,\gamma)$ be a regular $5$-colored graph then the regular genus of the graph does not change by canceling $1$-dipoles in $\Gamma$.}
\end{remark}

\begin{remark}\label{rem:fg}
{\rm
Let $(\Gamma,\gamma)$ be a $5$-colored graph (possibly with boundary) representing the simplicial cell-complex $\mathcal{K}(\Gamma)$. Let $(\Gamma',\gamma')$ be the $5$-colored graph after canceling of $1$-dipole from  $(\Gamma,\gamma)$. Let $\mathcal{K}(\Gamma')$ be the simplicial cell-complex corresponding to $(\Gamma',\gamma')$. Then $|\mathcal{K}(\Gamma')|$ is a deformation retract of $|\mathcal{K}(\Gamma)|$. Thus, $|(\Gamma',\gamma')|$ and $|\mathcal{K}(\Gamma')|$ have the same fundamental group.}
 \end{remark}
 
 Let $(\hat \Gamma,\hat \gamma) \in \tilde{\mathbb{G}}_4$ be a regular gem of a compact connected $4$-manifold $M$ with empty or connected boundary. Let  $m$ and $\hat{m}$ be the rank of the fundamental groups of $M$ and the corresponding singular manifold $\widehat{M}$ respectively. Let $\hat g_{ijk}=g(\hat \Gamma_{\{i,j,k\}})$. It follows  from Proposition \ref{prop:rank} that  $\hat g_{ijk} \geq \hat{m}+1$ and $\hat g_{ij4} \geq m+1$ for $\{ i,j,k \} \in \Delta_3$. Let $\hat g_{ijk} = (\hat{m}+1)+\hat t_{ijk}$ and $\hat g_{ij4} = (m+1)+\hat t_{ij4}$ where $\hat t_{ijk}\geq 0$.
 
 \begin{proposition} [\cite{ccg20}] \label{prop:rhoepsilon}
 Let $(\hat \Gamma,\hat \gamma) \in \tilde{\mathbb{G}}_4$ be a regular gem of a compact connected $4$-manifold $M$ with empty or connected boundary. Let $\widehat{M}$, $m$, $\hat{m}$, $\hat t_{ijk}$ be as above. Then for any cyclic permutation $\varepsilon=(\varepsilon_0,\dots,\varepsilon_4)$,
$$\rho_\varepsilon (\hat \Gamma)=2\chi(\widehat{M})+5m-2(m-\hat{m})-4+ \sum_{i \in \mathbb{Z}_5} \hat t_{\varepsilon_i \varepsilon_{i+2} \varepsilon_{i+4}}.$$
 \end{proposition}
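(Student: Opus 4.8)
The plan is to turn the genus identity for $\hat\Gamma$ into the stated formula by a bookkeeping argument with three inputs: the Euler characteristic of $\mathcal K(\hat\Gamma)$ read off from the colour combinatorics, the fact that every $3$-coloured residue of $\hat\Gamma$ is a disjoint union of $2$-spheres, and the defining relations for the numbers $\hat t_{ijk}$. Since $(\hat\Gamma,\hat\gamma)$ is a regular $5$-coloured graph representing the singular $4$-manifold $\hat M$, the singular-case genus formula of Subsection~\ref{sec:genus} gives $2-2\rho_\varepsilon(\hat\Gamma)=\sum_{i\in\mathbb{Z}_5}\hat g_{\varepsilon_i\varepsilon_{i+1}}-3p$ for every cyclic permutation $\varepsilon$, where $2p=|V(\hat\Gamma)|$ and the sum runs over the five ``consecutive'' pairs of the cyclic order $\varepsilon$. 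Using Remarks~\ref{rem:chi}, \ref{rem:genus}, \ref{rem:fg} (invariance of $\rho_\varepsilon$, $\chi$, $m$ and $\hat m$ under $1$-dipole cancellation) I reduce to the case where $\hat\Gamma$ is contracted, so that $\mathcal K(\hat\Gamma)$ has exactly $d+1=5$ vertices, i.e.\ $f_0=5$.

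Counting the simplices of $\mathcal K(\hat\Gamma)$ colour by colour gives $f_1=\sum_{|S|=3}\hat g_S$, $f_2=\sum_{|P|=2}\hat g_P$, $f_3=5p$ and $f_4=2p$ (each $1$-coloured residue being a perfect matching on the $2p$ vertices), hence $\chi(\hat M)=5-f_1+f_2-3p$. The structural input is that for \emph{every} $3$-subset $\{i,j,k\}$ of $\Delta_4$ the residue $\hat\Gamma_{\{i,j,k\}}$ is a disjoint union of $\hat g_{ijk}$ copies of $S^2$ --- each of its connected components is a gem of the link of a $1$-simplex of $\mathcal K(\hat\Gamma)$, and in a singular $4$-manifold the link of a $1$-simplex is a PL $2$-sphere --- so equating its Euler characteristic, computed from the graph, with $2\hat g_{ijk}$ yields the exact identities $\hat g_{ij}+\hat g_{ik}+\hat g_{jk}=2\hat g_{ijk}+p$. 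Summing these over all ten $3$-subsets gives $3f_2=2f_1+10p$; summing them over the five triples $\{\varepsilon_i,\varepsilon_{i+2},\varepsilon_{i+4}\}$, whose complements are precisely the five ``non-consecutive'' pairs of $\varepsilon$, gives $2\Sigma_{\mathrm{diag}}+\Sigma_{\mathrm{side}}=2T'+5p$, where $T'=\sum_{i\in\mathbb{Z}_5}\hat g_{\varepsilon_i\varepsilon_{i+2}\varepsilon_{i+4}}$ and $\Sigma_{\mathrm{side}}$, $\Sigma_{\mathrm{diag}}$ are the sums of $\hat g$ over the consecutive, resp.\ non-consecutive, pairs (so $\Sigma_{\mathrm{side}}+\Sigma_{\mathrm{diag}}=f_2$).

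Solving the last two relations for $\Sigma_{\mathrm{side}}$ and feeding this into the genus formula gives $\rho_\varepsilon(\hat\Gamma)=1+4p-f_2+T'$; eliminating $f_1$ between $3f_2=2f_1+10p$ and $\chi(\hat M)=5-f_1+f_2-3p$ gives $f_2=10+4p-2\chi(\hat M)$, so that $p$ cancels and $\rho_\varepsilon(\hat\Gamma)=2\chi(\hat M)-9+T'$. Finally, recalling $\hat g_{ijk}=(\hat m+1)+\hat t_{ijk}$ and $\hat g_{ij4}=(m+1)+\hat t_{ij4}$ and observing that, for any $\varepsilon$, exactly three of the five triples $\{\varepsilon_i,\varepsilon_{i+2},\varepsilon_{i+4}\}$ contain the colour $4$, one obtains $T'=2(\hat m+1)+3(m+1)+\sum_{i\in\mathbb{Z}_5}\hat t_{\varepsilon_i\varepsilon_{i+2}\varepsilon_{i+4}}$; substituting this and writing $3m+2\hat m=5m-2(m-\hat m)$ gives the asserted equality. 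I expect the only real obstacle to be the combinatorial bookkeeping: handling the partition of the ten $3$-subsets determined by $\varepsilon$, keeping track of where the singular colour $4$ lies among the relevant triples, and checking that $p$ (and, had one substituted it, $\sum_{|S|=3}\hat t_S$) genuinely drops out --- which is precisely where the normalization $f_0=5$ enters.
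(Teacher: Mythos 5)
Your core computation is correct and is essentially the standard derivation behind this proposition: reading $f_0,\dots,f_4$ of $\mathcal K(\hat\Gamma)$ off the residue counts, the identity $\hat g_{ij}+\hat g_{ik}+\hat g_{jk}=2\hat g_{ijk}+p$ coming from the fact that every $3$-coloured residue is a union of $2$-sphere gems, the splitting of the ten pairs into the five consecutive and five diagonal ones (the latter being the complements of the triples $\{\varepsilon_i,\varepsilon_{i+2},\varepsilon_{i+4}\}$), and the observation that exactly three of those five triples contain the singular colour $4$; the algebra leading from $\rho_\varepsilon=1+4p-f_2+T'$ and $f_2=10+4p-2\chi(\hat M)$ to the stated formula checks out.

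The gap is the opening reduction. Remarks~\ref{rem:chi}, \ref{rem:genus}, \ref{rem:fg} make $\rho_\varepsilon$, $\chi(\hat M)$, $m$, $\hat m$ dipole-invariant, but the quantities $\hat g_{ijk}$ (hence the $\hat t_{ijk}$ appearing on the right-hand side, which are defined for the \emph{original} graph $\hat\Gamma$) are not: cancelling a $1$-dipole of colour $c$ merges two components of $\hat\Gamma_S$ for every $3$-set $S$ with $c\notin S$, and exactly two of the five triples $\{\varepsilon_i,\varepsilon_{i+2},\varepsilon_{i+4}\}$ avoid $c$, so $\sum_{i\in\mathbb{Z}_5}\hat t_{\varepsilon_i\varepsilon_{i+2}\varepsilon_{i+4}}$ drops by $2$ while $\rho_\varepsilon$ is unchanged. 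Thus what you actually prove is the identity for the contracted graph with its own $\hat t\,'$, not for $\hat\Gamma$ with its $\hat t$, and no reduction can bridge this: your own bookkeeping, run without the normalization $f_0=5$, gives $\rho_\varepsilon(\hat\Gamma)=2\chi(\hat M)+3m+2\hat m+6-2f_0+\sum_i\hat t_{\varepsilon_i\varepsilon_{i+2}\varepsilon_{i+4}}$, where $f_0$ is the number of vertices of $\mathcal K(\hat\Gamma)$. In particular, inserting a $1$-dipole of a colour in $\Delta_3$ into a crystallization produces a regular gem of the same $M$ lying in $\tilde{\mathbb{G}}_4$ for which the stated right-hand side exceeds $\rho_\varepsilon$ by exactly $2$, so the proposition as written really requires $\hat\Gamma$ to be contracted ($f_0=5$) --- which is the only situation in which the paper applies it (to the graph $\hat\Gamma'$ obtained after cancelling all $1$-dipoles in the proof of Theorem~\ref{thm:weightedgenus}). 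Your argument becomes a complete and correct proof once the reduction sentence is replaced by the hypothesis that $\hat\Gamma$ is contracted (equivalently, once the statement is read, as in the cited source, for crystallizations of $\hat M$).
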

 
 \begin{proposition}[\cite{ccg20}] \label{prop:omegasum}
 Let $(\hat \Gamma,\hat \gamma)$ be a regular $5$-colored graph in $ \tilde{\mathbb{G}}_4$. For any cyclic permutation  $\varepsilon=(\varepsilon_0,\dots,\varepsilon_4=4)$ of $\Delta_4$, let $\varepsilon' =(\varepsilon_1,\varepsilon_3,\varepsilon_0,\varepsilon_2,\varepsilon_4=4)$. Then, 
 $$\omega_G (\hat \Gamma)=6(\rho_\varepsilon(\hat \Gamma)+\rho_{\varepsilon'} (\hat \Gamma)).$$ 
 \end{proposition}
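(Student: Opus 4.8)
The plan is to reduce the sum over all $12$ permutations of $\Delta_4$ to the two cyclic permutations $\varepsilon$ and $\varepsilon'$ by exploiting the structure of the formula in Proposition \ref{prop:rhoepsilon}. Recall that for a regular gem $(\hat\Gamma,\hat\gamma)\in\tilde{\mathbb{G}}_4$ we have
$$\rho_\varepsilon(\hat\Gamma)=2\chi(\hat M)+5m-2(m-\hat m)-4+\sum_{i\in\mathbb{Z}_5}\hat t_{\varepsilon_i\varepsilon_{i+2}\varepsilon_{i+4}},$$
so the dependence of $\rho_\varepsilon$ on $\varepsilon$ sits entirely in the $\hat t$-term, which is a sum over the five ``jump-by-two'' triples $\{\varepsilon_i,\varepsilon_{i+2},\varepsilon_{i+4}\}$ associated to the cyclic order $\varepsilon$. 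Since $\omega_G(\hat\Gamma)=\sum_{j=1}^{12}\rho_{\varepsilon^{(j)}}(\hat\Gamma)$ is a sum of $12$ such expressions, the constant part contributes $12\bigl(2\chi(\hat M)+5m-2(m-\hat m)-4\bigr)$, and it remains to understand $\sum_{j}\sum_{i\in\mathbb{Z}_5}\hat t_{\varepsilon^{(j)}_i\varepsilon^{(j)}_{i+2}\varepsilon^{(j)}_{i+4}}$.

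The first step is a combinatorial counting lemma: as $\varepsilon$ ranges over all $12$ cyclic permutations of $\Delta_4=\{0,1,2,3,4\}$, each of the $\binom{5}{3}=10$ unordered triples of colors occurs the same number of times among the ``jump-by-two'' triples $\{\varepsilon_i,\varepsilon_{i+2},\varepsilon_{i+4}\}$. A quick count: there are $12$ cyclic permutations, each contributing $5$ triples, for $60$ triples total; by the symmetry of the symmetric group action on cyclic orders, each of the $10$ triples appears exactly $6$ times. Hence $\sum_{j=1}^{12}\sum_{i\in\mathbb{Z}_5}\hat t_{\varepsilon^{(j)}_i\varepsilon^{(j)}_{i+2}\varepsilon^{(j)}_{i+4}}=6\sum_{\{a,b,c\}\subset\Delta_4}\hat t_{abc}$, where the sum on the right is over all $10$ color-triples. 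This gives $\omega_G(\hat\Gamma)=12\bigl(2\chi(\hat M)+5m-2(m-\hat m)-4\bigr)+6\sum_{\{a,b,c\}}\hat t_{abc}$.

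The second step is to express the same quantity in terms of $\varepsilon$ and $\varepsilon'$ alone. A single cyclic permutation $\varepsilon$ determines, via its jump-by-two pattern, exactly $5$ of the $10$ triples; the key observation is that the complementary $5$ triples are precisely those determined by $\varepsilon'=(\varepsilon_1,\varepsilon_3,\varepsilon_0,\varepsilon_2,\varepsilon_4)$ — that is, taking every other element of $\varepsilon$ (the ``jump-by-two'' reordering) turns the jump-by-two triples into the jump-by-one triples of the original, and these two families of $5$ triples partition the $10$ triples. I would verify this by a direct check on the cyclic pattern: if $\varepsilon=(\varepsilon_0,\varepsilon_1,\varepsilon_2,\varepsilon_3,\varepsilon_4)$, then $\{\varepsilon_i,\varepsilon_{i+2},\varepsilon_{i+4}\}=\{\varepsilon_i,\varepsilon_{i+2},\varepsilon_{i-1}\}$, and rewriting these five sets and the five sets coming from $\varepsilon'$ shows that together they list each $3$-subset of $\Delta_4$ exactly once. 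Consequently $\sum_{i\in\mathbb{Z}_5}\hat t_{\varepsilon_i\varepsilon_{i+2}\varepsilon_{i+4}}+\sum_{i\in\mathbb{Z}_5}\hat t_{\varepsilon'_i\varepsilon'_{i+2}\varepsilon'_{i+4}}=\sum_{\{a,b,c\}}\hat t_{abc}$, so by Proposition \ref{prop:rhoepsilon},
$$\rho_\varepsilon(\hat\Gamma)+\rho_{\varepsilon'}(\hat\Gamma)=2\bigl(2\chi(\hat M)+5m-2(m-\hat m)-4\bigr)+\sum_{\{a,b,c\}}\hat t_{abc}.$$
Multiplying by $6$ and comparing with the expression for $\omega_G(\hat\Gamma)$ from the first step yields $\omega_G(\hat\Gamma)=6\bigl(\rho_\varepsilon(\hat\Gamma)+\rho_{\varepsilon'}(\hat\Gamma)\bigr)$, as claimed.

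I expect the main obstacle to be the purely combinatorial bookkeeping in the two counting claims: verifying that each $3$-subset of $\Delta_4$ appears exactly $6$ times over all cyclic permutations, and — more delicately — verifying that the jump-by-two triples of $\varepsilon$ and of $\varepsilon'$ partition the $10$ triples rather than merely covering them. Both are finite checks, best organized by fixing $\varepsilon=(0,1,2,3,4)$ as a representative (the general case follows by relabeling colors) and listing the five triples $\{0,2,4\},\{1,3,0\},\{2,4,1\},\{3,0,2\},\{4,1,3\}$ against the five triples coming from $\varepsilon'=(1,3,0,2,4)$; one then reads off that the two lists are complementary. Care is also needed to confirm that Proposition \ref{prop:rhoepsilon} applies to every cyclic permutation appearing in $\omega_G$ (it is stated for arbitrary cyclic $\varepsilon$, so this is immediate), and that the normalization $\frac{d!}{2}=12$ for $d=4$ matches the count of cyclic permutations used above.
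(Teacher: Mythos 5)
Both of your counting claims are correct: over the $12$ cyclic permutations of $\Delta_4$ each of the $10$ color triples occurs exactly $6$ times among the jump-by-two triples, and the five jump-by-two triples of $\varepsilon$ together with those of $\varepsilon'=(\varepsilon_1,\varepsilon_3,\varepsilon_0,\varepsilon_2,\varepsilon_4)$ partition all ten (your explicit list for $\varepsilon=(0,1,2,3,4)$ checks out). Note also that the paper itself gives no proof of Proposition \ref{prop:omegasum}; it is quoted from \cite{ccg20}, where the argument is exactly this kind of double counting, but carried out on pairs rather than triples and without any topological input.

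The genuine gap is the input you feed into this skeleton. Proposition \ref{prop:rhoepsilon} is stated only for a regular gem of a compact connected $4$-manifold with \emph{empty or connected} boundary, whereas Proposition \ref{prop:omegasum} is asserted for an arbitrary regular $5$-colored graph in $\tilde{\mathbb{G}}_4$, and the paper applies it in the proof of Theorem \ref{thm:weightedgenus} precisely to a gem $\hat\Gamma$ of a manifold with $h\geq 1$ boundary components; for $h>1$ the formula of Proposition \ref{prop:rhoepsilon} is not valid as stated for $\hat\Gamma$ itself (its constant term carries no $h$, while for such gems one has, e.g., $\hat g_{ijk}\geq \hat m+h$ rather than $\hat m+1$, cf.\ the proof of Theorem \ref{theorem:sem-simple}), so your derivation uses a premise that is unavailable exactly in the cases where the proposition is needed. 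The standard repair is to run the same cancellation one level lower: for any regular $5$-colored graph of order $2p$ representing a singular $4$-manifold the paper records $2-2\rho_\varepsilon(\hat\Gamma)=\sum_{i\in\mathbb{Z}_5} g_{\varepsilon_i\varepsilon_{i+1}}-3p$, so $\rho_\varepsilon$ is an $\varepsilon$-independent constant minus $\tfrac12\sum g_{ab}$ taken over the five \emph{consecutive} pairs of $\varepsilon$; each of the ten pairs is consecutive in exactly $6$ of the $12$ cyclic permutations, and the consecutive pairs of $\varepsilon'$ are exactly the diagonal pairs of $\varepsilon$, so the two pair-families partition all ten pairs and $\omega_G(\hat\Gamma)=6\bigl(\rho_\varepsilon(\hat\Gamma)+\rho_{\varepsilon'}(\hat\Gamma)\bigr)$ follows for every $\hat\Gamma\in\tilde{\mathbb{G}}_4$ with no hypotheses on $\chi$, $m$, $\hat m$. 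Alternatively, you could salvage your route by first cancelling the $1$-dipoles of color $4$ (Remark \ref{rem:genus} preserves each $\rho_\varepsilon$, hence $\omega_G$), reducing to the connected-boundary case where Proposition \ref{prop:rhoepsilon} does apply; but as written the appeal to Proposition \ref{prop:rhoepsilon} over-assumes.
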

 
 \begin{theorem}\label{thm:weightedgenus}
 Let $M$ be a compact connected PL $4$-manifold with $h$ boundary components. Let $\widehat{M}$ be the singular $4$-manifold obtained from $M$ by coning off each boundary component of $M$. Let $m$ and $\hat{m}$ be the rank of fundamental groups of $M$ and $\widehat{M}$ respectively. Then,
 $$\tilde{G} (M) \geq 2 \chi(M)+3m+2h-4+2 \hat{m}$$ and
 $$\tilde{D}_G (M) \geq 12(2 \chi(M)+3m+2h-4+2 \hat{m}).$$
 \end{theorem}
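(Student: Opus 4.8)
The plan is to reduce the theorem about a manifold with $h$ boundary components to the $h\le 1$ case, which is Proposition \ref{prop:rhoepsilon} and Proposition \ref{prop:omegasum}, by a quotient/coning construction that collapses all $h$ singular vertices into one. Start with a non-regular $(\Gamma,\gamma)\in\mathbb{G}_4$ representing $M$, pass to the regular gem $(\tilde\Gamma,\tilde\gamma)\in\tilde{\mathbb{G}}_4$ obtained by choosing a single fixed color $c\in\Delta_3$ for every boundary component, and rename $c\leftrightarrow 4$, so that color $4$ is the only singular color and $\tilde\Gamma_{\hat 4}$ is connected. The associated singular manifold is $\hat M$, whose fundamental group has rank $\hat m$ and whose Euler characteristic is $\chi(M)+h$ by Remark \ref{rem:chi} and Remark \ref{rem:plush}. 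The key point is that $\hat M$ now has $h$ singular vertices (labeled $4$), but Proposition \ref{prop:rhoepsilon} was stated only for connected boundary, i.e.\ one singular vertex; I must redo the counting argument of that proposition allowing $g_{ijk4}$ (the number of $4$-labeled vertices, equivalently components of $\tilde\Gamma_{\{i,j,k\}}$ for $\{i,j,k\}\subset\Delta_3$) to be $h$ rather than $1$.

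Concretely, the first main step is to re-derive the analogue of Proposition \ref{prop:rhoepsilon} in this setting. Using Proposition \ref{prop:rank}: for $\{i,j\}\subset\Delta_3$ no color outside $\{i,j\}$ except $4$ is singular, so one still gets lower bounds on the $g$'s. Precisely, for a $3$-subset $\{i,j,k\}\subset\Delta_3$ the subcomplex argument gives $\hat g_{ijk}\ge \hat m+1$ (using all non-singular colors), while $\hat g_{ij4}\ge m+1$ since after removing the singular color $4$ we are computing $\pi_1$ of $\check M=M$; but now the graph $\tilde\Gamma_{\{i,j,k\}}$ has exactly $h$ components that are the $4$-labeled singular vertices together with the remaining components, so the correct normalization is $\hat g_{\{i,j,k\text{ in }\Delta_3\}} \ge \hat m + h$ with the slack $\hat t_{ijk}\ge 0$ defined by $\hat g = (\hat m+h)+\hat t_{ijk}$, while still $\hat g_{ij4}=(m+1)+\hat t_{ij4}$. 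Plugging the Euler-characteristic formula $2-2\rho_\varepsilon(\tilde\Gamma)=\sum_{i\in\mathbb Z_5} g_{\varepsilon_i\varepsilon_{i+1}} + (1-d)p$ for the singular case, and using $\chi(\hat M)=\chi(M)+h$, I expect
\[
\rho_\varepsilon(\tilde\Gamma)=2\chi(M)+2h+5m-2(m-\hat m)-4+2(h-1)+\sum_{i\in\mathbb Z_5}\hat t_{\varepsilon_i\varepsilon_{i+2}\varepsilon_{i+4}},
\]
wait — the bookkeeping of the $h$ versus $h-1$ shift is exactly the delicate accounting I must get right, but the upshot is that the $\hat t$'s are nonnegative, so $\rho_\varepsilon(\tilde\Gamma)\ge 2\chi(M)+3m+2h-4+2\hat m$; minimizing over $\varepsilon$ and over regular gems in $\tilde{\mathbb{G}}_4$ gives the bound on $\tilde{\mathcal G}(M)$.

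For the G-degree bound, I would combine the inequality $\rho_\varepsilon(\tilde\Gamma)\ge 2\chi(M)+3m+2h-4+2\hat m$ for every cyclic permutation $\varepsilon$ with Proposition \ref{prop:omegasum}: $\omega_G(\tilde\Gamma)=6(\rho_\varepsilon(\tilde\Gamma)+\rho_{\varepsilon'}(\tilde\Gamma))$ for the paired permutation $\varepsilon'$, so each such pairing contributes at least $12(2\chi(M)+3m+2h-4+2\hat m)$, and since $\omega_G$ is a fixed sum over all permutations this forces $\tilde{\mathcal D}_G(M)\ge 12(2\chi(M)+3m+2h-4+2\hat m)$. Then take the minimum over all regular gems in $\tilde{\mathbb{G}}_4$ representing $M$. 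The main obstacle is the first step: faithfully extending the counting in Proposition \ref{prop:rhoepsilon} from one singular vertex to $h$, i.e.\ tracking how the $h$ extra singular $4$-vertices change the baseline counts $\hat g_{ijk}$, the total vertex count $p$, and the Euler-characteristic substitution simultaneously, so that the final constant comes out exactly $2\chi(M)+3m+2h-4+2\hat m$ rather than off by a multiple of $h$. Everything after that is a clean application of the already-established propositions together with $\hat t\ge 0$.
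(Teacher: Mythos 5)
Your high-level strategy (lower-bound $\rho_\varepsilon$ via Proposition \ref{prop:rank} and then convert to a G-degree bound via Proposition \ref{prop:omegasum}) is in the right family, but there is a genuine gap exactly where you flag it: the extension of Proposition \ref{prop:rhoepsilon} from one singular vertex to $h$ is asserted, never carried out, and the formula you ``expect'' is in fact wrong. With your normalization $\hat g_{ijk}=(\hat m+h)+\hat t_{ijk}$ for $\{i,j,k\}\subset\Delta_3$ and $\hat g_{ij4}=(m+1)+\hat t_{ij4}$, the correct identity for a gem with $g(\hat\Gamma_{\hat i})=1$ for $i\in\Delta_3$ and $g(\hat\Gamma_{\hat 4})=h$ is $\rho_\varepsilon(\hat\Gamma)=2\chi(\hat M)+3m+2\hat m-4+\sum_{i\in\mathbb Z_5}\hat t_{\varepsilon_i\varepsilon_{i+2}\varepsilon_{i+4}}$, which together with $\chi(\hat M)=\chi(M)+h$ (Remark \ref{rem:plush}) gives exactly the stated bound; your extra term $2(h-1)$ should not be there (for $h\geq 2$ it would ``prove'' a bound strictly stronger than the theorem and would contradict the equality case of Theorem \ref{theorem:sem-simple}). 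There is also a quantifier problem in your set-up: $\tilde{\mathcal G}(M)$ and $\tilde{\mathcal D}_G(M)$ are minima over \emph{all} regular gems in $\tilde{\mathbb G}_4$, so the bound must be proved for an arbitrary such gem, not only for the gems you manufacture from a non-regular graph by the colour-swap construction; in particular a general gem may have several vertices of each non-singular colour as well, and then the maximal-tree count in Proposition \ref{prop:rank} changes, which your counting does not address.

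The paper closes precisely this gap by a reduction you mention in your first sentence but never use: given an arbitrary regular gem $(\hat\Gamma,\hat\gamma)\in\tilde{\mathbb G}_4$ of $M$, cancel $1$-dipoles of every colour --- in particular the $h-1$ dipoles of the singular colour $4$, which merge the $h$ singular vertices into a single vertex whose link is $\partial^1M\#\cdots\#\partial^hM$ --- obtaining $(\hat\Gamma',\hat\gamma')$ whose complex has five vertices. By Remarks \ref{rem:chi}, \ref{rem:genus} and \ref{rem:fg} this cancellation changes neither $\rho_\varepsilon$, nor the Euler characteristic, nor the relevant ranks, so Proposition \ref{prop:rhoepsilon} applies verbatim to $\hat\Gamma'$ (empty or connected boundary), and then $\chi(\hat M')=\chi(M)+h$ yields $\rho_\varepsilon(\hat\Gamma)=\rho_\varepsilon(\hat\Gamma')\geq 2\chi(M)+3m+2h-4+2\hat m$ for every $\varepsilon$; Proposition \ref{prop:omegasum} then gives the G-degree bound as you indicate. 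So the second half of your argument is fine, but the first half needs either this dipole reduction or an honest re-derivation of Proposition \ref{prop:rhoepsilon} in the presence of $h$ singular vertices (and of arbitrary numbers of non-singular vertices), which you have not supplied.
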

 
 \begin{proof}
Let $\partial^1 M, \partial ^2 M,\dots, \partial^h M$ be the boundary components of $M$. Let $(\hat{\Gamma},\hat{\gamma})\in \tilde{G}_4$ be any regular gem of $M$, i.e., it represents the singular manifold $\widehat M$.
 Let $(\hat{\Gamma}',\hat{\gamma}')\in \tilde{G}_4$ be the regular $5$-colored graph obtained from $(\hat{\Gamma},\hat{\gamma})$  by canceling all possible $1$-dipoles of each color labeled by $i \in \Delta_{4}$.
Then, $\mathcal{K}(\hat{\Gamma}')$ has $5$ vertices such that the vertices labeled by color $i \in \Delta_{3}$ are non singular and the vertex labeled by color $4$ is singular. Further, the link of the vertex labeled by color $4$ in $|\mathcal{K}(\hat{\Gamma}')|$ is $\partial^1 M \#\dots\# \partial^h M$. Let $\widehat{M}^ \prime=|\mathcal{K}(\hat{\Gamma}')|$. Then from Remarks \ref{rem:chi} and \ref{rem:plush}, we have $\chi(\widehat{M}')=\chi(\widehat{M})=\chi(M)+h$. Let $M'$ be the manifold with connected boundary obtained by removing a small neighborhood of  vertex $4$ from $\widehat{M}'$. Then Remark \ref{rem:fg} implies that rank($\pi_1 (\widehat{M}^ \prime)$) = rank($\pi_1 (\widehat{M})$) = $\hat{m}$. Further, it is easy to see that $M$ is a deformation retract of $M'$. Thus,  rank($\pi_1 (M')$) = rank($\pi_1 (M)$) = $m$. For $\{i_0,\dots,i_k\} \subset \Delta_4$, let $\hat g_{i_0\cdots i_k}'$ denote the number of connected components of $\hat{\Gamma}_{\{i_0,\dots,i_k\}}'$. It follows from Proposition \ref{prop:rank} that $\hat g_{ijk}' \geq \hat{m}+1$ and $\hat g_{ij4}' \geq m+1$ for $\{ i,j,k \} \in \Delta_3$. We consider $\hat g_{ijk}'= (\hat{m}+1)+\hat t_{ijk}'$ and $\hat g_{ij4}' = (m+1)+\hat t_{ij4}'$ where $\hat t_{ijk}' \geq 0$.

 For $\varepsilon=(\varepsilon_0,\dots,\varepsilon_4)$, let $\varepsilon'=(\varepsilon_1,\varepsilon_3,\varepsilon_0,\varepsilon_2,\varepsilon_4=4)$. It follows from Proposition \ref{prop:rhoepsilon} that
  \begin{equation}\label{eq:rho}
  \rho_ \varepsilon(\hat{\Gamma})= \rho_ \varepsilon(\hat{\Gamma}')= 2 \chi(\widehat{M}')+5m-2(m-\hat{m})-4+\sum_{i\in \mathbb{Z}_5}  \hat t_{\varepsilon_i \varepsilon_{i+2} \varepsilon_{i+4}}'
  \end{equation}
  and $$\rho_ {\varepsilon'} (\hat{\Gamma})= \rho_ {\varepsilon'} (\hat{\Gamma}')= 2 \chi(\widehat{M}')+5m-2(m-\hat{m})-4+\sum_{i\in \mathbb{Z}_5} \hat t_{\varepsilon_i \varepsilon_{i+1} \varepsilon_{i+2}}'.$$
  Then by Proposition \ref{prop:omegasum} we have
  \begin{align*}
   \omega_G (\hat{\Gamma})&=6(\rho_ \varepsilon(\hat{\Gamma})+\rho_ {\varepsilon'} (\hat{\Gamma}))\\
   &=6(4 \chi(\widehat{M}')+10m-4(m-\hat{m})-8+\sum_{0\leq i<j<k\leq 4} \hat t_{ijk}')\\
   &=6(4 \chi(M)+4h+10m-4(m-\hat{m})-8+\sum_{0\leq i<j<k\leq 4} \hat t_{ijk}').
\end{align*}
Thus, for any $(\hat{\Gamma},\hat{\gamma}) \in \tilde{\mathbb{G}}_d$, $$ \omega_G (\hat{\Gamma})\geq 12(2 \chi(M)+3m+2h-4+2 \hat{m}).$$
 Since 
$$\tilde{\mathcal{D}}_G (M)=\min \{\omega_G (\hat{\Gamma}) \ | \  (\hat{\Gamma},\hat{\gamma}) \in \tilde{\mathbb{G}}_d  \mbox{ is a regular gem of } M\},$$
we have $$\tilde{D}_G (M) \geq 12(2 \chi(M)+3m+2h-4+2 \hat{m}).$$

Further, it follows from Eq. \eqref{eq:rho} that, for any regular gem $(\hat{\Gamma},\hat{\gamma}) \in \tilde{\mathbb{G}}_d$ of $M$,
 $$\rho(\hat{\Gamma}) \geq \rho_ \varepsilon (\hat{\Gamma})\geq 2 \chi(M)+3m+2h-4+2 \hat{m}.$$
Since $$\tilde{\mathcal G}(M) = \min \{\rho(\hat{\Gamma}) \ | \  (\hat{\Gamma},\hat{\gamma}) \in \tilde{\mathbb{G}}_d \mbox{ is a regular gem of } M\},$$
we have $$\tilde{\mathcal{G}}(M) \geq 2 \chi(M)+3m+2h-4+2 \hat{m}.$$
 \end{proof}
 
 The above result in combination with Theorem \ref{thm:genrelweighted} gives a lower bound for the regular genus of compact connected $4$-manifolds $M$ with boundary, which  is stronger than the previous known lower bounds for the regular genus of $M$.
\begin{corollary}
Let $M$ be a connected compact $4$-manifold with $h$ boundary components. Then, the regular genus of $M$ satisfies the following inequality: $$\mathcal{G}(M) \geq 2 \chi(M)+3m+2h-4+2 \hat{m},$$
where $m$ and $\hat{m}$ are the rank of fundamental groups of $M$ and the corresponding singular manifold $\widehat{M}$ respectively.
\end{corollary}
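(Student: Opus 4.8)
The plan is to obtain the bound by chaining the two inequalities already in hand, so essentially no new work is needed. First I would invoke Theorem~\ref{thm:genrelweighted}, whose first assertion gives $\mathcal{G}(M)\geq\tilde{\mathcal{G}}(M)$ for every compact connected $4$-manifold $M$ with boundary; recall that this inequality is proved by taking a non-regular graph $(\Gamma,\gamma)\in\mathbb{G}_4$ realizing $\mathcal{G}(M)$ with the genus attained by a cyclic permutation $\varepsilon=(\varepsilon_0,\dots,\varepsilon_4=4)$, passing to the associated regular gem in $\tilde{\mathbb{G}}_4$ with singular color $c\in\{\varepsilon_0,\varepsilon_3\}$, and applying Corollary~\ref{cor:rho}$(i)$, which guarantees that $\rho_\varepsilon$ is unchanged.

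Next I would apply Theorem~\ref{thm:weightedgenus}, whose first inequality states $\tilde{\mathcal{G}}(M)\geq 2\chi(M)+3m+2h-4+2\hat{m}$. Putting the two together yields
$$\mathcal{G}(M)\ \geq\ \tilde{\mathcal{G}}(M)\ \geq\ 2\chi(M)+3m+2h-4+2\hat{m},$$
which is exactly the claimed estimate.

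There is no genuine obstacle here: all the content is packaged into Theorems~\ref{thm:genrelweighted} and~\ref{thm:weightedgenus}. If one preferred a self-contained argument instead of quoting these, one could start from an arbitrary $(\Gamma,\gamma)\in\mathbb{G}_4$ representing $M$, choose a cyclic permutation $\varepsilon=(\varepsilon_0,\dots,\varepsilon_4=4)$ attaining $\rho(\Gamma)$, build the regular gem $(\tilde{\Gamma},\tilde{\gamma})\in\tilde{\mathbb{G}}_4$ with singular color $c\in\{\varepsilon_0,\varepsilon_3\}$, cancel all $1$-dipoles (which leaves $\chi$, the regular genus, and the relevant fundamental groups unchanged by Remarks~\ref{rem:chi}, \ref{rem:plush}, \ref{rem:genus}, \ref{rem:fg}), and then run Proposition~\ref{prop:rhoepsilon} together with the rank estimates of Proposition~\ref{prop:rank} exactly as in the proof of Theorem~\ref{thm:weightedgenus}; but the two-line deduction above is cleaner and is presumably what the author intends.
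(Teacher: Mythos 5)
Your proposal is correct and matches the paper exactly: the corollary is stated there as an immediate consequence of combining Theorem~\ref{thm:genrelweighted} ($\mathcal{G}(M)\geq\tilde{\mathcal{G}}(M)$) with Theorem~\ref{thm:weightedgenus} ($\tilde{\mathcal{G}}(M)\geq 2\chi(M)+3m+2h-4+2\hat{m}$), which is precisely your two-line deduction.
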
 

 \begin{definition}
 Let $M$ be a connected compact  $4$-manifold with $h$ boundary components and $\widehat{M}$ be the corresponding singular $4$-manifold. Let $(\hat{\Gamma},\hat{\gamma}) \in \tilde{G}_4$ be the regular gem of $M$ such that $g(\hat \Gamma_ {\hat{d}})=h$ and $g(\hat \Gamma_ {\hat{c}})=1$ for each $c \in \Delta_3$. For $\{i_0,\dots,i_k\}\subset \Delta_4$, let $g_{i_0\cdots i_k}$ denote the number of connected components of $\hat{\Gamma}_{\{i_0,\dots,i_k\}}$. Let $m$ and $\hat{m}$ be the rank of fundamental groups of $M$ and $\widehat{M}$ respectively. Then, $\hat{\Gamma}$ is said to be semi-simple if 
 $$g_{ijk}=\hat{m}+h  \mbox{  and  } g_{ij4}=m+1, \mbox{ where } i,j,k \in \Delta_3$$
 and is said to be the weak semi-simple if, for a cyclic permutation $\varepsilon=(\varepsilon_0,\dots,\varepsilon_4)$ of $\Delta_4$,
$$g_{\varepsilon_i \varepsilon_{i+2} \varepsilon_{i+4}}=\hat{m}+h, \mbox{ where }  i \in \{ 1,3 \} \mbox{  and  } g_{\varepsilon_i \varepsilon_{i+2} \varepsilon_{i+4}}=m+1, \mbox{ where }  i \in \{ 0,2,4 \}.$$ 

 \end{definition}
 
 \begin{definition}
 Let $M$ be a connected compact  $4$-manifold with boundary. Then $M$ is said to admit a semi-simple gem if there exists a regular gem $(\tilde{\Gamma},\tilde{\gamma}) \in \tilde{G}_4$ of $M$, which is semi-simple, and $M$ is said to admit a weak semi-simple gem if there exists a regular gem $(\tilde{\Gamma},\tilde{\gamma}) \in \tilde{G}_4$ of $M$, which is weak semi-simple.
 \end{definition}
 
 Let $M$ be a connected compact  $4$-manifold with $h$ boundary components. Then we always have a regular gem $(\hat{\Gamma},\hat{\gamma}) \in \tilde{G}_4$  of $M$ such that $g(\hat \Gamma_ {\hat{d}})=h$ and $g(\hat \Gamma_ {\hat{i}})=1$ for each $i \in \Delta_3$. Let $(\Gamma,\gamma)$ be a crystallization of $M$. Let $(\tilde{\Gamma},\tilde{\gamma}) \in \tilde{G}_4$ be the corresponding regular gem of $M$. Then by removing all $1$-dipoles of color $i \in \Delta_3$, let $(\hat{\Gamma},\hat{\gamma})$ be the resulting graph. Then $(\hat{\Gamma},\hat{\gamma}) \in \tilde{G}_4$, $g(\hat \Gamma_ {\hat{d}})=h$ and $g(\hat \Gamma_ {\hat{i}})=1$ for each $i \in \Delta_3$.

 \begin{theorem}\label{theorem:sem-simple}
 Let $M$ be a connected compact $4$-manifold with $h$ boundary components. Let $m$ and $\hat{m}$ be the rank of the fundamental groups of $M$ and the corresponding singular manifold $\widehat{M}$ respectively. Then, $M$ admits a semi-simple gem if and only if $\tilde{D}_G (M) = 12(2 \chi(M)+3m+2h-4+2 \hat{m})$ and $M$ admits a weak semi-simple gem if and only if  $\tilde{G}(M) = 2 \chi(M)+3m+2h-4+2 \hat{m}$.
 \end{theorem}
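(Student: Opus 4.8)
\medskip

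\noindent\emph{Proof strategy.} The plan is to reduce both equivalences to the single structural identity
$$\rho_\varepsilon(\hat\Gamma)\;=\;\bigl(2\chi(M)+3m+2h-4+2\hat m\bigr)\;+\;\sum_{i\in\mathbb Z_5}\tilde t_{\varepsilon_i\varepsilon_{i+2}\varepsilon_{i+4}}\,,$$
valid for every regular gem $(\hat\Gamma,\hat\gamma)\in\tilde{\mathbb G}_4$ of $M$ with $g(\hat\Gamma_{\hat c})=1$ for each $c\in\Delta_3$ (call such a gem \emph{reduced}; note $g(\hat\Gamma_{\hat d})=h$ is then automatic, since the color-$d$ vertices of $\mathcal K(\hat\Gamma)$ are exactly the $h$ singular points of $\hat M$) and every cyclic permutation $\varepsilon$ of $\Delta_4$, where $\tilde t_{ijk}:=g_{ijk}-(\hat m+h)\ge0$ when $d\notin\{i,j,k\}$ and $\tilde t_{ijd}:=g_{ijd}-(m+1)\ge0$. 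Granting this, Proposition~\ref{prop:omegasum} gives $\omega_G(\hat\Gamma)=12\bigl(2\chi(M)+3m+2h-4+2\hat m\bigr)+6\sum_{0\le i<j<k\le4}\tilde t_{ijk}$, because as $\varepsilon$ and $\varepsilon'$ vary the triples $\{\varepsilon_i,\varepsilon_{i+2},\varepsilon_{i+4}\}$ sweep out each of the ten $3$-subsets of $\Delta_4$ exactly once. Hence a reduced gem $\hat\Gamma$ is semi-simple iff all ten $\tilde t_{ijk}$ vanish, iff $\omega_G(\hat\Gamma)$ equals the lower bound of Theorem~\ref{thm:weightedgenus}; and $\hat\Gamma$ is weak semi-simple with respect to $\varepsilon$ (normalised so that $\varepsilon_4=d$, which is the implicit convention of the definition) iff the five terms $\tilde t_{\varepsilon_i\varepsilon_{i+2}\varepsilon_{i+4}}$ vanish, iff $\rho_\varepsilon(\hat\Gamma)$ equals that bound.

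To prove the identity I would cancel color-$d$ $1$-dipoles. If $g(\hat\Gamma_{\hat d})=h>1$ then, $\hat\Gamma$ being connected, some edge of color $d$ joins two components of $\hat\Gamma_{\hat d}$, i.e.\ is a $1$-dipole of color $d$; cancelling it merges two singular vertices and --- since the defining inequality $\hat\Gamma_{\hat d}(x)\ne\hat\Gamma_{\hat d}(y)$ a fortiori puts $x,y$ in distinct components of each $\hat\Gamma_{\{i,j,k\}}$ with $\{i,j,k\}\subset\Delta_3$, while the only $\{l,d\}$-labelled $1$-simplices of $\mathcal K(\hat\Gamma)$ touched by the surgery (remove the two $4$-simplices of the dipole, reglue along their common boundary $3$-sphere) are the two at the dipole, which get identified --- it decreases each $g_{ijk}$ with $d\notin\{i,j,k\}$ by exactly $1$, leaves every $g_{ijd}$ and every $g(\hat\Gamma_{\hat c})$, $c\in\Delta_3$, unchanged, and, being a dipole move, leaves every $\rho_\varepsilon$ (hence $\omega_G$ and the regular genus, cf.\ Remark~\ref{rem:genus}) unchanged. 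After $h-1$ such cancellations we reach $(\hat\Gamma',\hat\gamma')\in\tilde{\mathbb G}_4$ with exactly $5$ vertices, representing the singular $4$-manifold $\hat M'$ whose single singular point has link $\partial^1M\#\cdots\#\partial^hM$; exactly as in the proof of Theorem~\ref{thm:weightedgenus}, Remarks~\ref{rem:chi}, \ref{rem:plush} and \ref{rem:fg} give $\chi(\hat M')=\chi(M)+h$, $\operatorname{rank}\pi_1(\hat M')=\hat m$ and $\operatorname{rank}\pi_1(M')=m$. Substituting $g_{ijk}(\hat\Gamma')=g_{ijk}(\hat\Gamma)-(h-1)$ and $g_{ijd}(\hat\Gamma')=g_{ijd}(\hat\Gamma)$ into Proposition~\ref{prop:rhoepsilon} applied to $\hat\Gamma'$ yields the identity, and the inequalities $\tilde t_{ijk}\ge0$ are precisely $g_{ijk}(\hat\Gamma')\ge\hat m+1$ and $g_{ijd}(\hat\Gamma')\ge m+1$, which come from Proposition~\ref{prop:rank}. (Alternatively the identity can be proved directly, by rewriting $\chi(\hat M)$ and the two $p$-terms of the singular-manifold genus formula through the bi- and tricolored component numbers, using that each $\hat\Gamma_{\hat c}$ represents a closed $3$-manifold.)

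With the identity in hand both equivalences follow. If $M$ admits a semi-simple (resp.\ weak semi-simple) gem $\hat\Gamma$, then $\omega_G(\hat\Gamma)$ (resp.\ $\rho_\varepsilon(\hat\Gamma)$ for the relevant $\varepsilon$) equals the lower bound of Theorem~\ref{thm:weightedgenus}, whence $\tilde{\mathcal D}_G(M)$ (resp.\ $\tilde{\mathcal G}(M)$) equals that bound, using $\tilde{\mathcal D}_G(M)\le\omega_G(\hat\Gamma)$ and $\tilde{\mathcal G}(M)\le\rho(\hat\Gamma)\le\rho_\varepsilon(\hat\Gamma)$ together with Theorem~\ref{thm:weightedgenus}. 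Conversely, assume $\tilde{\mathcal D}_G(M)$ (resp.\ $\tilde{\mathcal G}(M)$) attains its lower bound, witnessed by some $(\hat\Gamma_0,\hat\gamma_0)\in\tilde{\mathbb G}_4$ representing $\hat M$. Whenever $g(\hat\Gamma_{0,\hat c})>1$ for a color $c\in\Delta_3$ there is (as above) a $1$-dipole of color $c$; cancelling it decreases $g(\hat\Gamma_{0,\hat c})$ by $1$, fixes the other $g(\hat\Gamma_{0,\hat{c'}})$, preserves the PL type of $\hat M$ (the merged color-$c$ vertex has link $\mathbb S^3\#\mathbb S^3=\mathbb S^3$), and preserves $\omega_G$ and $\rho$. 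Iterating produces a reduced gem $\hat\Gamma_1\in\tilde{\mathbb G}_4$ of $M$ with $\omega_G(\hat\Gamma_1)=\omega_G(\hat\Gamma_0)$ (resp.\ $\rho(\hat\Gamma_1)=\rho(\hat\Gamma_0)$). In the G-degree case the identity and the inequalities $\tilde t_{ijk}\ge0$ force all ten $\tilde t_{ijk}(\hat\Gamma_1)$ to vanish, i.e.\ $\hat\Gamma_1$ is semi-simple; in the regular-genus case $\rho_\varepsilon(\hat\Gamma_1)\ge2\chi(M)+3m+2h-4+2\hat m$ for all $\varepsilon$ with equality forced for at least one $\varepsilon$, and normalising that $\varepsilon$ so that $\varepsilon_4=d$ makes the five corresponding $\tilde t_{\varepsilon_i\varepsilon_{i+2}\varepsilon_{i+4}}(\hat\Gamma_1)$ vanish, i.e.\ $\hat\Gamma_1$ is weak semi-simple. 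Since a reduced gem has $g(\hat\Gamma_{1,\hat d})=h$ and $g(\hat\Gamma_{1,\hat c})=1$ for $c\in\Delta_3$ --- exactly the shape required by the definitions --- $M$ admits a semi-simple (resp.\ weak semi-simple) gem.

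The main obstacle is the structural identity, and inside it the two local facts about $1$-dipole cancellations: that a color-$d$ cancellation changes the tricolored component numbers exactly as claimed, and that a cancellation of a non-singular color leaves the represented singular $4$-manifold, $\omega_G$ and the regular genus unchanged. Both are ``local'' computations on $\mathcal K(\hat\Gamma)$ --- the first a bookkeeping of the $\{l,d\}$-labelled edges across the elementary surgery that removes the dipole's two $4$-simplices and reglues their boundary, the second the classical invariance of proper dipole moves --- and once they are secured the whole argument reduces mechanically to Propositions~\ref{prop:rhoepsilon}, \ref{prop:omegasum} and \ref{prop:rank}.
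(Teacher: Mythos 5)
Your proposal is correct and follows essentially the same route as the paper: normalise to a gem with $g(\hat\Gamma_{\hat c})=1$ for $c\in\Delta_3$, cancel the $h-1$ colour-$4$ $1$-dipoles, apply Propositions \ref{prop:rank}, \ref{prop:rhoepsilon} and \ref{prop:omegasum} to express $\rho_\varepsilon$ and $\omega_G$ as the lower bound of Theorem \ref{thm:weightedgenus} plus non-negative $t$-terms, and read off both equivalences. Your explicit treatment of the converse direction (reducing an arbitrary minimising gem by non-singular-colour $1$-dipoles, which preserves $\hat M$, $\omega_G$ and $\rho$) and of the effect of a colour-$4$ dipole on the tricoloured component counts merely spells out steps the paper leaves implicit.
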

 
 \begin{proof}
  Let $(\hat{\Gamma},\hat{\gamma}) \in \tilde{G}_4$ be a regular gem of $M$ such that $g(\hat \Gamma_ {\hat{d}})=h$ and $g(\hat \Gamma_ {\hat{i}})=1$ for each $i \in \Delta_3$. Let  $(\hat{\Gamma}',\hat{\gamma}') \in \tilde{G}_4$ be the graph obtained from $(\hat{\Gamma},\hat{\gamma})$   by canceling all the $(h-1)$ number of $1$-dipoles of color $4$, and let $\widehat{M}'=|\mathcal{K}(\hat{\Gamma}')|$.  Let $M'$ be the manifold with connected boundary obtained by removing a small neighborhood of  vertex $4$ from $\widehat{M}'$. It follows from Remark \ref{rem:fg} that rank($\pi_1 (\widehat{M}^ \prime)$) = rank($\pi_1 (\widehat{M})$) = $\hat{m}$. Since $M$ is a deformation retract of $M'$, rank($\pi_1 (M')$) = rank($\pi_1 (M)$) = $m$. For $\{i_0,\dots,i_k\} \subset \Delta_4$, let $\hat g_{i_0\cdots i_k}$ and $\hat g_{i_0 \cdots i_k}'$ denote the number of components of $\hat{\Gamma}_{\{i_0,\dots,i_k\}}$ and $\hat{\Gamma}_{\{i_0,\dots,i_k\}}'$ respectively.
Further, Proposition \ref{prop:rank}  implies,  $\hat g_{ijk}-h+1= \hat g_{ijk}'\geq \hat{m}+1$ and $\hat g_{ij4}= \hat g_{ij4}' \geq m+1$, for $i,j,k \in \Delta_3$. We write, $\hat g_{ijk}-h+1= \hat g_{ijk}' = (\hat{m}+1) + \hat t_{ijk}'$ and $\hat g_{ij4} =\hat g_{ij4}'= (m+1)+\hat t_{ij4}'$. Therefore,

\begin{align*}
 M \mbox{ admits a semi-simple gem} \Leftrightarrow & ~  \mbox{there exists a regular gem } (\hat{\Gamma},\hat{\gamma}) \in \tilde{G}_4 \mbox{ of } M \mbox{ such that } \\
 &~ \sum_{0\leq i<j<k\leq 4} \hat t_{ijk}'=0\\
\Leftrightarrow  & ~ \omega_G (\hat{\Gamma}) = 12(2 \chi(M)+3m+2h-4+2 \hat{m})\\
\Leftrightarrow & ~ \tilde{D}_G (M) = 12(2 \chi(M)+3m+2h-4+2 \hat{m}).
\end{align*}

On the other hand,
\begin{align*}
 M \mbox{ admits a weak semi-simple gem }  \Leftrightarrow & ~  \mbox{there exists a regular gem } (\hat{\Gamma},\hat{\gamma}) \in \tilde{G}_4 \mbox{ of } M \mbox{ such} \\
 & ~ \mbox{that } \sum_{i \in \mathbb{Z}_5} \hat t_{\varepsilon_i \varepsilon_{i+2} \varepsilon_{i+4}}' =0 \mbox{ for some } \varepsilon=(\varepsilon_0,\dots,\varepsilon_4)\\
 \Leftrightarrow & ~   \rho_\varepsilon (\hat{\Gamma}) = 2 \chi(M)+3m+2h-4+2 \hat{m}\\
 \Leftrightarrow & ~  \tilde{\mathcal{G}} (M) = 2 \chi(M)+3m+2h-4+2 \hat{m}.
\end{align*}
 \end{proof}
 
For a connected compact $4$-manifold $M$ with boundary, let us define its {\em weighted gem-complexity} as the non-negative integer $\tilde{\mathit k}(M) =
p - 1$, where $2p$ is the minimum order of a regular gem of $M$.
 
 \begin{corollary}
 Let $M$ be a connected compact $4$-manifold with $h$ boundary components. Let $m$ be the rank of the fundamental group of $M$. Then
$$\tilde{D}_G (M)=6(\chi (M)-1+\tilde{\mathit{k}} (M)).$$
\end{corollary}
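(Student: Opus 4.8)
The plan is to combine the formula for $\tilde{\mathcal D}_G(M)$ from Theorem~\ref{thm:weightedgenus} with a careful bookkeeping of the order of a minimal regular gem that attains the G-degree bound. First I would recall that, by the proof of Theorem~\ref{thm:weightedgenus}, any regular gem $(\hat\Gamma,\hat\gamma)\in\tilde{\mathbb{G}}_4$ of $M$ satisfies
$$\omega_G(\hat\Gamma)=6\Big(4\chi(M)+4h+10m-4(m-\hat m)-8+\sum_{0\le i<j<k\le 4}\hat t'_{ijk}\Big),$$
where the $\hat t'_{ijk}\ge 0$ are the defect terms for the $1$-dipole-reduced graph $\hat\Gamma'$. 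The equality $\tilde{\mathcal D}_G(M)=12(2\chi(M)+3m+2h-4+2\hat m)$ is thus realized precisely by a semi-simple gem, i.e. one with all $\hat t'_{ijk}=0$; by Theorem~\ref{theorem:sem-simple} such a gem exists, and I would take $(\hat\Gamma,\hat\gamma)\in\tilde{\mathbb{G}}_4$ to be a minimal-order one with the additional normalization $g(\hat\Gamma_{\hat d})=h$, $g(\hat\Gamma_{\hat i})=1$ for $i\in\Delta_3$.

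The second ingredient is to express the order $2p$ of this gem in terms of $\chi(M)$, $h$, and $m$. Here I would use the two combinatorial identities available in the theory: the Euler-characteristic count of $\mathcal K(\hat\Gamma)$ in terms of the $g$-numbers (the number of $s$-simplices equals the number of components of the appropriate restricted subgraph), together with the fact that the number of $4$-simplices is $2p$. Writing $\chi(\hat M)=\sum_{s}(-1)^s f_s$ and substituting $f_0=5$, $f_s=\sum g$-values for the relevant color subsets, all of which for a semi-simple gem are pinned down by $\hat m$, $m$, $h$ (namely $g_{ijk}=\hat m+h$, $g_{ij4}=m+1$, and the lower-rank counts via $1$-dipole cancellation), one solves for $p$. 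After simplification using $\chi(\hat M)=\chi(M)+h$ (Remark~\ref{rem:plush}) this should yield
$$p-1=\tilde k(M)=\tfrac{1}{2}\big(\text{(the G-degree coefficient)}\big)-\text{const},$$
i.e. a linear relation $\tilde{\mathcal D}_G(M)=6(\chi(M)-1+\tilde k(M))$ once everything is collected. Equivalently, and perhaps more cleanly, I would first prove the two-sided bound: Remark~\ref{rem:genus} / Remark~\ref{rem:chi}-style dipole invariance shows $\omega_G$ is unchanged under $1$-dipole cancellation, so the minimum G-degree is attained on a gem with no cancellable $1$-dipoles, whose order is governed by the same identities; then $p$ is forced, giving both $\tilde{\mathcal D}_G(M)\le 6(\chi(M)-1+\tilde k(M))$ and the reverse inequality.

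Concretely the steps, in order, are: (i) fix a semi-simple regular gem $(\hat\Gamma,\hat\gamma)\in\tilde{\mathbb{G}}_4$ of minimal order $2p$, which exists by Theorem~\ref{theorem:sem-simple} and realizes $\tilde{\mathcal D}_G(M)$; (ii) write down $\chi(\hat M)$ as the alternating sum of the numbers of simplices of $\mathcal K(\hat\Gamma)$, i.e. in terms of the $g$-numbers $g_\emptyset=2p$ down to $g_{\Delta_4}=5$, and plug in the semi-simple values together with the $1$-dipole-reduced relations $g_{ijk}-(h-1)=\hat m+1$, $g_{ij4}=m+1$; (iii) solve the resulting linear equation for $p$ in terms of $\chi(M)$, $h$, $m$, $\hat m$; (iv) substitute into $\tilde{\mathcal D}_G(M)=12(2\chi(M)+3m+2h-4+2\hat m)$ and into $\tilde k(M)=p-1$ and check the two sides agree, using $\chi(\hat M)=\chi(M)+h$; (v) conversely, argue minimality of $p$ forces the gem to be semi-simple with the normalized $g_{\hat c}$-counts — otherwise a strictly smaller regular gem of $M$ could be produced (e.g. by cancelling a $1$-dipole, which preserves both the manifold and $\omega_G$), contradicting the definition of $\tilde k(M)$. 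The main obstacle I anticipate is step~(ii)–(iii): getting the intermediate-rank $g$-number counts ($g_{ij}$, $g_{ijkl}$, $g_i$) right — these are not all pinned by the semi-simplicity hypothesis, so I expect to need the rigidity that $1$-dipole cancellation imposes (each cancellation decreases $f$-vector entries by the fixed amounts in Remark~\ref{rem:chi}), plus the fact that a minimal semi-simple gem has exactly $5$ vertices after cancelling $1$-dipoles of colors in $\Delta_3$, so that the ``excess'' order $2p-\big(4h+1\big)$ or similar is entirely accounted for by the $4$-colored structure. Once the dependence of $p$ on the other invariants is isolated, the final identity is a one-line substitution.
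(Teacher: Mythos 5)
There is a genuine gap at the very first step, and it propagates through the whole plan. You assume, citing Theorem \ref{theorem:sem-simple}, that $M$ admits a semi-simple gem and hence that $\tilde{\mathcal{D}}_G(M)$ equals the lower bound $12(2\chi(M)+3m+2h-4+2\hat m)$. But that theorem is an equivalence, not an existence statement: a semi-simple gem exists \emph{if and only if} the bound is attained, and in general it need not be. The corollary, by contrast, is an unconditional identity valid for every $M$, including those with $\tilde{\mathcal{D}}_G(M)$ strictly larger than the bound (indeed, if the bound were always attained, the corollary would just restate Theorem \ref{thm:weightedgenus} with equality). For the same reason your step (v) fails: a regular gem of minimal order need not be semi-simple, and cancelling $1$-dipoles cannot manufacture one; so ``minimality of $p$ forces the gem to be semi-simple'' is not a valid argument. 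Plugging the semi-simple values $g_{ijk}=\hat m+h$, $g_{ij4}=m+1$ into the face count in steps (ii)--(iii) therefore only treats the special case where the bound is achieved.

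The paper's proof avoids this entirely by never discarding the defect terms. For an arbitrary regular gem $(\hat\Gamma,\hat\gamma)\in\tilde{\mathbb{G}}_4$ with the normalization $g(\hat\Gamma_{\hat d})=h$, $g(\hat\Gamma_{\hat i})=1$ ($i\in\Delta_3$), one has both
$\omega_G(\hat\Gamma)=6\bigl(4\chi(M)+4h+6m+4\hat m-8+\sum_{i<j<k}\hat t'_{ijk}\bigr)$
(from the proof of Theorem \ref{thm:weightedgenus}) and, from the Dehn--Sommerville relation of \cite{bb18},
$2p'=6\chi(\hat M')+2\sum_{i<j<k}\hat g'_{ijk}-30$, i.e. $2p'=6(\chi(M)+h)+12m+8\hat m-10+2\sum_{i<j<k}\hat t'_{ijk}$ with $p=p'+h-1$. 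The sum $\sum\hat t'_{ijk}$ appears in both expressions and cancels upon substitution, yielding the gem-independent identity $\omega_G(\hat\Gamma)=6(\chi(M)-1+p-1)$ for \emph{every} such gem, semi-simple or not; taking the minimum of both sides then gives $\tilde{\mathcal{D}}_G(M)=6(\chi(M)-1+\tilde k(M))$. Your second paragraph gestures at the right kind of Euler-characteristic/$g$-number bookkeeping, but without keeping the defects $\hat t'_{ijk}$ symbolic and observing this cancellation, the argument only covers manifolds attaining the lower bound, which is not what the corollary claims.
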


\begin{proof}
  Let $(\hat{\Gamma},\hat{\gamma}) \in \tilde{G}_4$ be a regular of $M$ such that $g(\hat \Gamma_ {\hat{d}})=h$ and $g(\hat \Gamma_ {\hat{i}})=1$ for each $i \in \Delta_3$. Let $2 p$ be the number of vertices of $\hat{\Gamma}$. Let $\hat{\Gamma}^\prime$, $\widehat{M}^\prime$, $m,\hat m$, $\hat g_{ijk}',\hat t_{ijk}'$ be as in Theorem \ref{theorem:sem-simple}. Let $2 p^\prime$ be the number of vertices of $\hat{\Gamma}^\prime$. Then $2p=2p^\prime +2(h-1)$. Then, from the proof of Theorem \ref{thm:weightedgenus}, we have
$$ \omega_G (\hat{\Gamma})=6(4 \chi(M)+4h+6m+4\hat{m}-8+\sum_{0\leq i<j<k\leq 4} \hat t_{ijk}').$$
Further by the Dehn–Sommerville equations in dimension four (cf. \cite[Lemma 6]{bb18}) we have 
$$2p'=6\chi(\widehat{M}')+2\sum_{0\leq i<j<k\leq 4} \hat g_{ijk}'-30=6(\chi(M)+h)+12m+8\hat m-10 +2\sum_{0\leq i<j<k\leq 4} \hat t_{ijk}'.$$
Therefore,
\begin{align*}
\omega_G({\hat{\Gamma}})&=6(4 \chi(M)+4h+6m+4\hat{m}-8+p-h+1-3\chi(M)-3h-6m-4\hat m+5)\\
&=6(\chi(M)-1+p-1).
\end{align*}
This proves the result.
\end{proof}
 
 \bigskip
 
 \bigskip

\noindent {\bf Acknowledgement:} The author would like to thank the anonymous referees for many useful comments and suggestions.
The first author is supported by DST INSPIRE Faculty Research Grant, India (Award No. DST/INSPIRE/04/2017/002471).
%
%

{\footnotesize

\end{document}